\newcommand{\bs}[1]{\boldsymbol{#1}}
\newcommand{\Div}[1]{\text{div}\,}
\newtheorem{definition}{{Definition}}
\newtheorem{lemma}{{Lemma}}[section]
\newtheorem{corollary}{Corollary}
\newtheorem{theorem}{{Theorem}}[section]
\numberwithin{equation}{section}
\title{A New Family of Boundary-Domain Integral Equations
for the Dirichlet Problem of the Diffusion Equation in
Inhomogeneous Media with $H^{-1}(\Omega)$ Source Term on Lipschitz Domains}
\author{Z.W. Woldemicheal, C.Fresneda-Portillo\footnote{Corresponding author, c.portillo@brookes.ac.uk}}
\begin{document}\maketitle
\begin{center}
The authors gratefully acknowledge the financial support received from the London Mathematical Society Scheme 5, Award 51809, for Collaboration with Developing Countries.
\end{center}

\abstract{The interior Dirichlet boundary value problem for the diffusion equation in non-homogeneous media is reduced to a system of Boundary-Domain Integral Equations (BDIEs) employing the parametrix obtained in \cite{carlos2} different from \cite{mikhailov1}. We further extend the results obtained in \cite{carlos2} for the mixed problem in a smooth domain with $L^{2}(\Omega)$ right hand side to Lipschitz domains and PDE right-hand in the Sobolev space $H^{-1}(\Omega)$, where neither the classical nor the canonical co-normal derivatives are well defined. Equivalence between the system of BDIEs and the original BVP is proved along with their solvability and solution uniqueness in appropriate Sobolev spaces. }

\section{Introduction}
The popularity of the boundary integral equation method (BIE) is owed to the reduction of dimension of a boundary value problem (BVP) with constant coefficients and homogeneous right hand side defined on a domain of $\mathbb{R}^{n}$. By applying the BIE method, one can reformulate the original BVP in terms of an equivalent integral equation defined exclusively on the boundary of the domain. This method has already been extensively studied for many boundary value problems, for instance: Laplace, Helmholtz, Stokes, Lamé, etc. \cite{hsiao, mclean, steinbach}. This method requires an explicit formula for the fundamental solution of the PDE operator in the BVP which is not always available when the BVP has variable coefficients\cite{mikhailov1,pomp}.

The overcome this issue, one can construct a parametrix (Levi function)\cite[Section 3]{mikhailov1} for the PDE operator and use it to derive an equivalent system of Boundary-Domain Integral Equations following a similar approach as for the BIE method. However, the reduction of dimension no longer applies as volume integrals will appear in the new formulation as a result of the remainder term. This is also the case for non-homogeneous problems with constant coefficients, \cite[Chapter 1 and 2]{hsiao}.

In order to preserve the reduction of dimension, one can use the method of radial integration method (RIM) which allows to transform volume integrals into boundary only integrals\cite{rim2}. This method has been successfully implemented to solve boundary-domain integral equations derived from BVPs with variable coefficients\cite{RIM,2dnumerics}. This method is also able to remove various singularities appearing in the domain integrals.

The recent developments on numerical approximation of the solution of BDIEs show that there are effective and fast algorithms able to compute the solution. For example: the collocation method\cite{Ravnik1, Ravnik2} which, although leads to fully populated matrices, it can be further enhanced by using hierarchical matrix compression and adaptive methods as shown in \cite{numerics} to reduce the computational cost. Localised approaches to reduce the matrix dimension and storage have also been developed\cite{localised, localised2} which lead to sparse matrices. 

Moreover, reformulating the original BVP in the Boundary Domain Integral Equation form can be beneficial, for instance, in inverse problems with variable coefficients\cite{chapko}.

On the one hand, the family of weakly singular parametrices of the form $P^{y}(x,y)$ for the particular operator
\begin{equation}\label{PDE}
\mathcal{A}u(x) := \sum_{i=1}^{3}\dfrac{\partial}{\partial x_{i}}\left(a(x)\dfrac{\partial u(x)}{\partial x_{i}}\right),
\end{equation}
 has been studied extensively studied\cite{mikhailov1,exterior,exterior2, mikhailov18}. Note that the superscript in $P^{y}(x,y)$ means that $P^{y}(x,y)$ is a function of the variable coefficient depending on $y$, this is 
 $$P^{y}(x,y)= P(x,y;a(y))=\dfrac{-1}{4\pi a(y)\vert x-y\vert}.$$ 
 
On the other hand, 
   $$P^{x}(x,y)= P(x,y;a(x))=\dfrac{-1}{4\pi a(x)\vert x-y\vert}.$$
  is another parametrix for the same operator $\mathcal{A}$. In this case, the parametrix depends on the variable coefficient $a(x)$. This parametrix was introduced in \cite{carlos2} for the mixed problem in smooth 3D domains and in \cite{carlosL} for the mixed problem in Lipschitz domains. Some preliminary results for the mixed problem in exterior domains have also been obtained\cite{carlos3}. 
  
  However, most of the numerical methods to solve BDIEs aforementioned are tested for the Dirichlet problem\cite{numerics,Ravnik1,Ravnik2,RIM}. In order to compare the performance of parametrices $P^{x}(x,y)$ and $P^{y}(x,y)$, one needs first to prove the equivalence between the original Dirichlet BVP and the system of BDIEs as well as the uniqueness of solution (well-posedness) of the system of BDIEs what corresponds to the main purpose of this paper.
  
The study of new families of parametrices is helpful at the time of constructing parametrices for systems of PDEs as shown in \cite[Section 1]{carlos2} for the Stokes system. In this case, the fundamental solution for the pressure does not present any relationship with the viscosity coefficient whereas the parametrix for the pressure depends on two variable viscosity coefficients: one depending on $y$ and another depending on $x$, see also \cite{carlos1}. 
   
The parametrix preserves a strong relation with the fundamental solution of the corresponding PDE with constant coefficients. Using this relation, it is possible to establish further relations between the surface and volume potential type operators for the variable-coefficient case with their counterparts for the constant coefficient case, see, e.g. \cite[Formulae (3.10)-(3.13)]{mikhailov1}, \cite[Formulae (4.6)-(4.11)]{carlos1}. 

   Different families of parametrices lead to different relations with their counterparts for the constant coefficient case. For the parametrices considered in this paper, these relations are rather simple, which makes it possible to obtain the mapping properties of the integral potentials in Sobolev spaces and prove the equivalence between the BDIE system and the BVP. After studying the Fredholm properties of the matrix operator which defines the system, its invertibility is proved, what implies the uniqueness of solution of the BDIE system.

In this paper, we extend the results obtained in \cite{carlos3, carlos2, dufera} by considering the source term of the equation  $\mathcal{A}u=f$ in the Sobolev space $ H^{-1}(\Omega)$. This happens for example, when the source term $f$ is the Dirac's delta distribution. The Dirac's delta is an example of distribution that does not belong to the space $L^{2}$ but belongs to $H^{-1}$ and is used in many applications in Physics, Engineering and other mathematical problems\cite{ponce, beyer, trujillo}. This generalisation for the source term introduces an additional issue on the definition of the co-normal derivative which is needed to derive BDIEs.  

The co-normal derivative operator is usually defined with the help of first Green identity, since the function derivatives do not generally exist on the boundary in the trace sense. However this definition is related to an extension of the PDE and its right-hand side from the domain $\Omega,$ where they are prescribed, to the boundary of the domain, where they are not. Since the extension is non-unique, the co-normal derivative appears to be non-unique operator, which is also non-linear in $u$ unless a linear relation between $u$ and the PDE right-hand side extension is enforced. This creates some difficulties particularly in formulating the BDIEs. 

To overcome these technical issues, we introduce a subspace of $H^{1}(\Omega)$ which is mapped by the PDE operator into the space $\widetilde{H}^{-\frac{1}{2}}(\Omega)$ for the right hand side\cite{Mikhailov15}. This allows to define an internal co-normal derivative operator, which is unique, linear in $u$ and coincides with the co-normal derivative in the trace sense if the latter does exist. This approach is applied to the formulation and analysis of direct segregated BDIEs equivalent to the stated Dirichlet BVP with a varaible coefficent and right hand side from $\widetilde{H}^{-1}(\Omega).$ 

Last but not the least, we generalise in this paper the results for the two-dimensional case and smooth boundary domains \cite{dufera}.

\section{Partial Differential Operators in  $\widetilde{H}^{-1}(\Omega)$}
Let $\Omega=\Omega^{+}$ be a bounded simply connected open Lipschitz domain and let $\Omega^{-}:=\mathbb{R}^{3}\smallsetminus\bar{\Omega}^{+}$ be the complementary (unbounded) domain. The Lipschitz boundary $\partial\Omega$ is connected and compact.   

We shall consider the partial differential equation 
\begin{equation}\label{ch4operatorA}
\mathcal{A}u(x):=\sum_{i=1}^{3}\dfrac{\partial}{\partial x_{i}}\left(a(x)\dfrac{\partial u(x)}{\partial x_{i}}\right)=f(x),\,\,x\in \Omega,
\end{equation}
where the variable smooth coefficient $a(x)\in \mathcal{C}^{2}(\overline{\Omega})$ is such that 
\begin{equation*}
0<a_{\rm min}\leqslant a(x)\leqslant a_{\rm max}<\infty,\; \forall x\in\overline{\Omega},\,\, a_{\rm min},a_{\rm max}\in \mathbb{R}, 
\end{equation*}
 $u(x)$ is the unknown function and $f$ is a given distribution on $\Omega$. It is easy to see that if $a\equiv 1$ then, the operator $\mathcal{A}$ becomes $\Delta$, the Laplace operator.
  
 In what follows $\mathcal{D}(\Omega):=C^{\infty}_{comp}(\Omega)$ denotes the space of Schwartz test functions, and $\mathcal{D}^{*}(\Omega)$ denotes the space of Schwartz distributions, $H^{s}(\Omega)$,
$H^{s}(\partial\Omega)$ denote the Bessel potential spaces, where $s\in\mathbb{R}$ (see e.g. \cite{mclean, hsiao} for more details). We recall that the spaces $H^{s}$ coincide with the Sobolev-Slobodetski spaces $W^{s}_{2}$ for any non-negative $s$. We denote by $\widetilde{H}^{s}(\Omega)$ 
the subspace of ${H}^{s}(\mathbb{R}^{3})$, $\widetilde{H}^{s}(\Omega):=\{g:g\in
H^{s}(\mathbb{R}^{3}),~\textrm{supp}~g\subset\overline{\Omega}\}.$ The space $H^{s}(\Omega)$ denotes the space of restriction on $\Omega$ of distributions from $H^{s}(\mathbb{R}^{3})$, defined as $H^{s}(\Omega):=\{r_{_{\Omega}}g:g\in
H^{s}(\mathbb{R}^{3})\}$ where $r_{_{\Omega}}$ denotes the restriction operator on $\Omega.$ Let us defined the dual topological spaces $H^{-1}(\Omega):=[\widetilde{H}^1(\Omega)]^{*}$ and $\widetilde{H}^{-1}(\Omega):=[H^1(\Omega)]^{*}$. 

For $u\in H^{1}(\Omega),$ the partial differential operator $\mathcal{A}$ is understood in the sense of 
distributions. Using the usual notation of distribution theory, equation \eqref{ch4operatorA} can be written as
\begin{equation}\label{eq:zc21}
\langle \mathcal{A}u,v\rangle_{\Omega}=\langle f,v\rangle_{\Omega},\quad \forall v\in
\mathcal{D}(\Omega)
\end{equation}

Using the differentiation properties of distributions, one can obtain the following identity
\begin{equation*}
\langle \mathcal{A}u,v\rangle_{\Omega}=- \langle a\nabla u, \nabla v\rangle_{\Omega}, \quad \forall v\in
\mathcal{D}(\Omega).
\end{equation*}
To simplify the notation, we introduce the operator $\mathcal{E}$ defined as follows
$$\mathcal {E}(u,v):=\int_{\Omega}a(x)\nabla u(x)\cdot\nabla v(x)\,\,dx,$$
which allow us to write equation \eqref{eq:zc21} as
\begin{equation}\label{eq:zc2}
\langle \mathcal{A}u,v\rangle_{\Omega}=-\mathcal {E}(u,v) = \langle f,v\rangle_{\Omega},\quad \forall v\in
\mathcal{D}(\Omega).
\end{equation}

Since the set $\mathcal{D}(\Omega)$ is dense in
$\widetilde{H}^1(\Omega)$, formula \eqref{eq:zc2} defines the continuous linear operator 
$\mathcal{A}: H^1(\Omega)\to H^{-1}(\Omega)$, 
where
\begin{equation*}
\langle\mathcal{A}u,v\rangle_{\Omega}:=-{\cal E}(u,v),~\quad \forall u\in H^{1}(\Omega),\,\, v\in
\widetilde{H}^1(\Omega).
\end{equation*}
Let us also define the so-called aggregate operator\cite[Section 3.1]{traces} of $\mathcal{A}$, as {$\mathcal{\check{A}}: H^1(\Omega)\to \widetilde{H}^{-1}(\Omega)$}
\begin{equation}\label{eq:zc3}
\langle\mathcal{\check{A}}u,v\rangle_{\Omega}
:=-\mathcal{E}(u,v), \quad \forall u,v\in H^{1}(\Omega),
\end{equation}
where the bilinear functional {$\mathcal{E}:H^{1}(\Omega) \times H^{1}(\Omega)\longrightarrow \mathbb{R}$} is defined as
\begin{align*}
\mathcal{E}(u,v)&:=-\int_{\mathbb{R}^{3}}\mathring{E}[a\nabla u](x)\cdot\nabla \mathring{E}{v}(x)\,\,dx =\langle \nabla\cdot\mathring{E}[a\nabla u],\mathring{E}{v}\rangle_{\mathbb{R}^{3}}\nonumber,~~
\forall u, v\in H^1(\Omega),
\end{align*}
where $\mathring{E}:H^{1}(\Omega)\to \widetilde{H}^{1}(\Omega)$ denotes the operator of extension of functions, defined in $\Omega,$ by zero outside $\Omega$ in $\mathbb{R}^{3}$. Note that the functional $\check{\mathcal{E}}$ is continuous due to its symmetry and therefore, so does $\mathcal{\check{A}}$. Now, we can provide an explicit definition for the aggregate operator \eqref{eq:zc3}
\begin{equation*}
\mathcal{\check{A}}u:=\nabla\cdot\mathring{E}[a\nabla u].
\end{equation*}
 For any $u\in H^{1}(\Omega),$ the functional $\mathcal{\check{A}}u$ belongs to
$\widetilde{H}^{-1}(\Omega)$ and is an extension of the functional $\mathcal{A}u\in H^{-1}(\Omega)$ whose domain is thus extended from $\widetilde{H}^1(\Omega)$ to the domain
$H^1(\Omega) $.

\section{Traces, co-normal derivatives and Green identities}
From the trace theorem for Lipschitz domains, we know that the trace of a scalar function $w\in H^{s}(\Omega^\pm)$, $s>1/2$,  belongs to the space $H^{s-\frac{1}{2}}(\partial\Omega)$, i.e., $\gamma^{\pm}w\in H^{s-\frac{1}{2}}(\partial\Omega)$.  Moreover, if $\frac{1}{2}<s<\frac{3}{2},$ the corresponding traces operators {${\gamma^{\pm}:=\gamma^{\pm}_{\partial\Omega}:H^{s}(\Omega^{\pm})\longrightarrow H^{s-\frac{1}{2}}(\partial\Omega)}$} are continuous\cite[Lemma 3.6]{costabel}.
 
For $u\in H^{s}(\Omega)$, $s>3/2$, we can define on $\partial\Omega$ the conormal derivative operators, $T^{\pm}$, in the classical sense
\begin{equation*}\label{ch4conormal}
T^{\pm}_{x}u := \sum_{i=1}^{3}a(x)\gamma^{\pm}\left( \dfrac{\partial u}{\partial x_{i}}\right)^{\pm}n_{i}^{\pm}(x),
\end{equation*}
where $n^{+}(x)$ is the exterior unit normal vector directed \textit{outwards} the interior domain $\Omega$ at a point $x\in\partial\Omega$. Respectively, $n^{-}(x)$ is the unit normal vector directed \textit{inwards} the interior domain $\Omega$ at a point $x\in \partial\Omega$. Sometimes, we will also use the notation $T^{\pm}_{x}u$ or $T^{\pm}_{y}u$ to emphasise the differentiation variable. When the variable of differentiation is obvious or is a dummy variable, we will simply use the notation $T^{\pm}u$.  

It is well known that the classical co-normal derivative operator is generally not well defined if $u\in H^{1}(\Omega)$ \cite[Appendix A]{Mikhailov15} \cite{traces, costabel}. Consequently, to correctly define a conormal derivative, one can draw on the first Green identity. This is the case for the generalised co-normal derivative and the canonical co-normal derivatives \cite[Definition 3.1 and 3.6]{traces}. 

\begin{definition}\label{genco}
Let $u\in H^1(\Omega)$ and $\mathcal{A}u=r_{\Omega}\tilde{f}$ in $\Omega$ for some
$\tilde{f}\in\widetilde{H}^{-1}(\Omega)$. Then, the {\em
generalised co-normal derivative} 
${T^+(\tilde{f},u)\in H^{-\frac{1}{2}}(\partial\Omega)}$ is defined as
\begin{equation*}
\langle{T}^+(\tilde{f},u),w\rangle_{_{\partial\Omega}}:=\langle
\tilde{f},\gamma^{-1}w\rangle_{\Omega}+{\cal E}(u,\gamma^{-1}w)=\langle
\tilde{f}-\mathcal{\check{A}}u,\gamma^{-1}w\rangle_{\Omega},~\quad \forall w\in H^{\frac{1}{2}}(\partial\Omega).
\end{equation*}
\end{definition}


If $u,v\in H^{1}(\Omega)$, $u$ satisifying  $\mathcal{A}u=r_{\Omega}\tilde{f}$ in $\Omega$ for some
$\tilde{f}\in\widetilde{H}^{-1}(\Omega),$ then, the first Green identity holds in the following form
\begin{equation}
\langle{T}^+(\tilde{f},u),\gamma^+v\rangle_{_{\partial\Omega}}=\langle
\tilde{f},v\rangle_{\Omega}+{\cal E}(u,v)=\langle
\tilde{f}-\mathcal{\check{A}}u,v\rangle_{\Omega},~\quad \forall v\in H^1(\Omega)\label{eq:zc6}.
\end{equation}

In order to appropriately define the canonical co-normal derivative\cite[Definition 6.5]{Mikhailov13}, we introduce the following space.
\begin{definition}
Let $s\in\mathbb{R}$ and $\mathcal{A}_*:H^{s}(\Omega)\longrightarrow\mathcal{D}^{*}(\Omega)$ be a linear operator. For $t\ge-\frac{1}{2},$
we introduce the space $$H^{s,t}(\Omega;\mathcal{A}_*):=\{g: g\in
H^s(\Omega):\mathcal{A}_*g|_{_{\Omega}}=\tilde{f}_{g}|_{_{\Omega}}, ~\tilde{f}_{g}\in
\widetilde{H}^t(\Omega)\}$$
endowed  with the norm $$\|g\|_{H^{s,t}(\Omega;\mathcal{A}_*)}:=\left( \|g\|^2_{H^s(\Omega)}+\|\tilde{f}_{g}\|^2_{\widetilde{H}^t(\Omega)}\right) ^{\frac{1}{2}}.$$
\end{definition}

In this paper, $\mathcal{A}_{*}$ will refer to either $\mathcal{A}$ or $\Delta$ in the above definition. Also, we remark, that $H^{1,-\frac{1}{2}}(\Omega ;\mathcal{A})=H^{1,-\frac{1}{2}}(\Omega ;\Delta)$ due to  $\mathcal{A}u-a\Delta u=\nabla a\cdot\nabla u\in L_{2}(\Omega)$ for any $u\in H^{1}(\Omega)$. 

\begin{definition}
For $u\in H^{1,-\frac{1}{2}}(\Omega;\mathcal{A}),$ we define the {\em canonical co-normal derivative} $T^{+}u\in H^{-\frac{1}{2}}(\partial\Omega)$ as,
\begin{equation*}
\langle T^{+}u,w\rangle_{_{\partial\Omega}}:=\langle
\mathcal{\tilde{A}}u,\gamma^{-1}w\rangle_{\Omega}+{\cal E}(u,\gamma^{-1}w)=\langle
\mathcal{\tilde{A}}u-\mathcal{\check{A}}u,\gamma^{-1}w\rangle_{\Omega},\,\,\, \forall w\in H^{\frac{1}{2}}(\partial\Omega),
\end{equation*}
where $\mathcal{\tilde{A}}u :=\mathring{E}(\mathcal{A}u)$.
\end{definition}

The canonical co-normal derivatives $T^{+}u$ is independent of (non-unique) choice of the operator $\gamma^{-1}$; independent of the source term $\tilde{f}$, unlike to generalised co-normal derivative defined in \eqref{eq:zc6}; it is linear with respect to $u$ and has the following continuous mapping property: the operator ${T^{+}:H^{t,-\frac{1}{2}}(\Omega;\mathcal{A})\longrightarrow H^{-\frac{1}{2}}(\partial\Omega)}$ when $t\geq-\frac{1}{2}$\cite[Theorem 3.9,]{traces} \cite[Theorem 2.13]{mikhailov18}. 

Furthermore, if $u\in H^{1,-\frac{1}{2}}(\Omega;\mathcal{A})$ and $v\in H^{1}(\Omega)$, then, the first Green identity for the canonical co-normal derivative holds in the following form, (cf. \cite[Theorem  2.13]{mikhailov18}). 
\begin{equation}\label{eq:zcgi}
\langle T^+u,\gamma^+v\rangle_{\partial\Omega} = {\check{\cal E}}(u,v)+\langle \mathcal{\tilde{A}}u, v\rangle_{\Omega}.
\end{equation}

Furthermore, if $u\in H^{1,-\frac{1}{2}}(\Omega;\mathcal{A})$ and $v\in H^{1}(\Omega)$, then, the first Green identity for the canonical co-normal derivative holds in the following form\cite[Theorem  2.13]{mikhailov18} 
\begin{equation*}
\langle T^+u,\gamma^+v\rangle_{\partial\Omega} = {\cal E}(u,v)+\langle \mathcal{\tilde{A}}u, v\rangle_{\Omega}.
\end{equation*}
In the particular case of $u \in H^{1,0}(\Omega;\mathcal{A})$ and $v\in H^{1}(\Omega)$, then, the first Green identity takes the form
\begin{equation}\label{eq:zcgi0}
\langle T^+u,\gamma^+v\rangle_{\partial\Omega} = {\cal E}(u,v)+\langle \mathcal{A}u, v\rangle_{\Omega}.
\end{equation}

To obtain the second Green identity for $u\in H^{1,-\frac{1}{2}}(\Omega;\mathcal{A})$ and $v\in H^{1}(\Omega)$, we use the first Green identity for the canonical co-normal derivative for $u$, i.e. identity \eqref{eq:zcgi} and subtract it from the first Green identity for the generalised co-normal derivative for $v$, this is, swapping $u$ by $v$ in formula \eqref{eq:zc6}. Hence, supposing that $r_{\Omega}\mathcal{A}v =r_{\Omega}\tilde{f}$ with $\tilde{f}\in \widetilde{H}^{-1}(\Omega)$, we obtain the following second Green identity
 \begin{equation}\label{eq:zc8}
 \langle \tilde{f}, u\rangle_{\Omega}-\langle \mathcal{\tilde{A}}u, v \rangle_{\Omega}=\langle T^+(\tilde{f},v),\gamma^+u\rangle_{\partial\Omega}-
 \langle T^+u,\gamma^+v\rangle_{\partial\Omega}.
  \end{equation}
If $u,v\in H^{1,-\frac{1}{2}}(\Omega;\mathcal{A}),$ then we arrive at the familiar form of the second Green identity for the canonical extension and canonical co-normal derivatives 
  \begin{equation*}
  \langle u,\mathcal{\tilde{A}}v\rangle_{\Omega}-\langle v,\mathcal{\tilde{A}}u\rangle_{\Omega}=\langle T^+v,\gamma^+u\rangle_{\partial\Omega}-
 \langle T^+u,\gamma^+v\rangle_{\partial\Omega}.
  \end{equation*}
In the particular case, when $u,v\in H^{1,0}(\Omega;\mathcal{A}),$ the previous identity becomes
    \begin{equation}
  \langle u,\mathcal{A}v\rangle_{\Omega}-\langle v,\mathcal{A}u\rangle_{\Omega}=\langle T^+v,\gamma^+u\rangle_{\partial\Omega}-
 \langle T^+u,\gamma^+v\rangle_{\partial\Omega}.
  \end{equation}
  
\section{Parametrices and remainders}
We aim to derive boundary-domain integral equation systems for the following \textit{Dirichlet} boundary value problem. This is finding $u\in H^{1}(\Omega)$ satisfying
\begin{subequations}\label{ch4BVP}
\begin{align}
\mathcal{A}u&=f,\hspace{1em}\text{in}\hspace{1em}\Omega\label{ch4BVP1},\\
\gamma^{+}u &= \varphi_{0},\hspace{1em}\text{on}\hspace{1em} \partial\Omega,\label{ch4BVP2}
\end{align}
where $\varphi_{0}\in H^{\frac{1}{2}}(\partial\Omega)$ and $f\in H^{-1}(\Omega).$
\end{subequations}
Let us recall that this BVP is uniquely solvable in $H^{1}(\Omega)$\cite[Theorem 4.3]{steinbach}. 

To obtain a system of boundary-domain integral equations for the boundary value problem \eqref{ch4BVP1}-\eqref{ch4BVP2}, we intend to use Boundary Integral Method (BIM) approach\cite{steinbach}. However, this method requires an explicit fundamental solution which is not always available when the PDE differential operator has variable coefficients, as it is the case for the operator $\mathcal{A}$. To overcome this problem, one can introduce a parametrix\cite{mikhailov1, carlos2,pomp}. 
 \begin{definition} A distribution $P(x,y)$ in two variables $x,y \in \mathbb{R}^{3}$ is said to be a parametrix or Levi function for a differential operator $A_{x}$ differentiating with respect to $x$, if the following identity is satisfied
 \begin{equation}\label{parametrixdef}
A_{x}P(x,y) = \delta(x-y)+R(x,y).
 \end{equation}
 where $\delta(.)$ is the Dirac distribution and $R(x,y)$ is remainder.\end{definition}
A parametrix for a given operator $A_{x}$ might not be unique. This is the case, for example, for the operator $\mathcal{A}$. One parametrix\cite{localised, mikhailov1} is given by
\begin{equation*}
P^y(x,y)=\dfrac{1}{a(y)} P_\Delta(x-y),\hspace{1em}x,y \in \mathbb{R}^{3},
\end{equation*} where
\begin{equation*}\label{ch4fundsol}
P_{\Delta}(x-y) = \dfrac{-1}{4\pi \vert x-y\vert}
\end{equation*}
is the fundamental solution of the Laplace equation.
The remainder corresponding to the parametrix $P^{y}$ is given by
\begin{equation*}
\label{ch43.4} R^y(x,y)
=\sum\limits_{i=1}^{3}\frac{1}{a(y)}\, \frac{\partial a(x)}{\partial x_i} \frac{\partial }{\partial x_i}P_\Delta(x-y)
\,,\;\;\;x,y\in {\mathbb R}^3.
\end{equation*}
{\em In this paper}, for the same operator $\mathcal{A}$ defined in \eqref{ch4operatorA}, we will use another parametrix\cite{carlos2} 
\begin{align}\label{ch4Px}
P(x,y):=P^x(x,y)=\dfrac{1}{a(x)} P_\Delta(x-y),\hspace{1em}x,y \in \mathbb{R}^{3},
\end{align}
which leads to the corresponding remainder 
\begin{align}\label{ch4remainder}
R(x,y) =R^x(x,y) &= 
-\sum\limits_{i=1}^{3}\dfrac{\partial}{\partial x_{i}}
\left(\frac{1}{a(x)}\dfrac{\partial a(x)}{\partial x_{i}}P_{\Delta}(x,y)\right)\\
& =
-\sum\limits_{i=1}^{3}\dfrac{\partial}{\partial x_{i}}
\left(\dfrac{\partial \ln a(x)}{\partial x_{i}}P_{\Delta}(x,y)\right),\hspace{0.5em}x,y \in \mathbb{R}^{3}\nonumber.
\end{align}
Note that the both remainders $R_x$ and $R_y$ are weakly singular, i.e., \[
R^x(x,y),\,R^y(x,y)\in \mathcal{O}(\vert x-y\vert^{-2}).\]
 This is due to the smoothness of the variable coefficient $a(\cdot)$.

\section{Integral operators}
\subsection{Volume potentials}
Following the steps of the boundary-domain integral method \cite{mikhailov1}, we will later on replace $u$ by the parametrix \eqref{ch4Px} in the first Green identity \eqref{eq:zcgi}. This will give an integral representation formula of the solution $u$ in terms of surface and volume potential-type integral operators. In this section, we define these surface and volume integral operators and study their mapping properties which will be applied to prove the main results of this paper. 

For the function $\rho$ defined on $\Omega\subset\mathbb{R}^{3},$ e.g., $\rho\in\mathcal{D}(\overline{\Omega})$ the volume potential and the remainder potential operator, corresponding to parametrix \eqref{ch4Px} and remainder \eqref{ch4remainder} are defined as
 \begin{align}
\textbf{P}\rho(y)&:=\langle P(\cdot,y),\rho\rangle_{\mathbb{R}^{3}}=\int_{\mathbb{R}^{3}}P(x,y)\rho(x)\hspace{0.25em}dx,~y\in\mathbb{R}^{3}
\label{Pdef1}\\
\mathcal{P}\rho(y)&:=\langle P(\cdot,y),\rho\rangle_{\Omega}=\int_{\Omega}P(x,y)\rho(x)\hspace{0.25em}dx,~y\in\Omega\label{Pdef2}\\
\mathcal{R}\rho(y)&:=\langle R(\cdot,y),\rho\rangle_{\Omega}=\int_{\Omega}R(x,y)\rho(x)\hspace{0.25em}dx,~y\in\Omega \label{Rdef}
\end{align}
From \eqref{ch4Px} and \eqref{ch4remainder}, we can see that the both, parametrix and remainder, can be written as a function of the fundamental solution of the Laplace operator. Therefore, one can represent the corresponding volume potential for the parametrix and remainder in terms of the analogous volume potential associated with the Laplace operator (operator $\mathcal{A}$ with $a=1$) as shown below
\begin{align}
\textbf{P}\rho&=\textbf{P}_{\triangle}\left(\dfrac{\rho}{a}\right),\label{ch4relP1}\\ 
\mathcal{P}\rho&=\mathcal{P}_{\Delta}\left(\dfrac{\rho}{a}\right),\label{ch4relP}\\
\mathcal{R}\rho&=\nabla\cdot\left[\mathcal{P}_{\Delta}(\rho\,\nabla \ln a)\right]-\mathcal{P}_{\Delta}(\rho\,\Delta \ln a).\label{ch4relR}
\end{align}
For $\rho\in H^{s}(\Omega),s\in\mathbb{R},$ the operator \eqref{Pdef1} is understood as $\textbf{P}\rho=\textbf{P}_{\triangle}\left(\dfrac{\rho}{a}\right),$ where the Newtonian potential operator $\textbf{P}_{\triangle}$ for the Laplace operator $\Delta$ is well defined in terms of the Fourier transform, on any space $H^{s}(\mathbb{R}^{3}).$ For $\rho\in\widetilde{H}^{s}(\Omega),$  and any $s\in\mathbb{R},$ definitions \eqref{Pdef2} and \eqref{Rdef} can be understood as
\begin{equation}\label{PRrel}
\mathcal{P}\rho=r_{\Omega}\textbf{P}_{\triangle}\left(\dfrac{\rho}{a}\right),\;\;\mathcal{R}\rho=r_{\Omega}\left(  \nabla\cdot\left[\textbf{P}_{\Delta}(\rho\,\nabla \ln a)\right]-\textbf{P}_{\Delta}(\rho\,\Delta \ln a)\right). 
\end{equation}
For the case, $\rho\in H^{s}(\Omega),-\frac{1}{2}<s<\frac{1}{2},$ as \eqref{PRrel} with $\rho$ replaced by $\widetilde{E}\rho$ where $\widetilde{E}:H^{s}(\Omega)\longrightarrow\widetilde{H}^{s}(\Omega),-\frac{1}{2}<s<\frac{1}{2}$ is the unique continous extension operator related with the operator $\mathring{E}$ of extension by zero\cite[Theorem 2.16]{traces}. 

The result \cite[Lemma 3.1]{mikhailov18} provides the mapping properties of the operator $\bs{P}_{\Delta}$, which applied to relations \eqref{ch4relP1}-\eqref{PRrel} provides us with the following result. 

 \begin{theorem}\label{ch4thmUR} Let $\Omega$ be a bounded Lipschitz domain in $\mathbb{R}^{3}$. Then, the operators
 \begin{align}
  \mu\textbf{P}&:H^{s}(\mathbb{R}^{3}) \longrightarrow H^{s+2}(\mathbb{R}^{3}),\hspace{0.5em} s\in \mathbb{R},\;\forall\mu\in\mathcal{D}(\mathbb{R}^{3})\\
 \mathcal{P}&:\widetilde{H}^{s}(\Omega) \longrightarrow H^{s+2}(\Omega),\hspace{0.5em} s\in \mathbb{R},\\
 &: H^{s}(\Omega) \longrightarrow H^{s+2}(\Omega),\hspace{0.5em} -\frac{1}{2}<s<\frac{1}{2},\\
 \mathcal{R}&:\widetilde{H}^{s}(\Omega) \longrightarrow H^{s+1}(\Omega),\hspace{0.5em} s\in \mathbb{R},\label{ch4mpvp3}\\
 &: H^{s}(\Omega) \longrightarrow H^{s+1}(\Omega),\hspace{0.5em} -\frac{1}{2}<s<\frac{1}{2}\,'\\
 &: H^{1}(\Omega) \longrightarrow H^{1,0}(\Omega;A),\hspace{0.5em}\,.\label{ch4mpvp5}
 \end{align}
  are bounded. 
 \end{theorem}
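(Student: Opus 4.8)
The plan is to reduce every assertion to the known mapping properties of the Newtonian potential $\mathbf{P}_\Delta$ recalled from \cite[Lemma 3.1]{mikhailov18}, and then simply track how the algebraic relations \eqref{ch4relP1}--\eqref{PRrel} shift the Sobolev orders. Three ingredients will be used repeatedly: (i) $\mathbf{P}_\Delta$ gains two derivatives, with the localized and restricted versions $\mu\mathbf{P}_\Delta:H^s(\mathbb{R}^3)\to H^{s+2}(\mathbb{R}^3)$ and $r_\Omega\mathbf{P}_\Delta:\widetilde H^s(\Omega)\to H^{s+2}(\Omega)$ bounded; (ii) multiplication by the coefficient factors $1/a$, $\nabla\ln a$, $\Delta\ln a$ is bounded on the relevant Sobolev spaces, which follows from $a\in C^2(\overline\Omega)$ together with $0<a_{\min}\le a\le a_{\max}$ (so that $1/a,\ \ln a\in C^2$); and (iii) $\nabla\cdot$ is bounded from $H^{s+2}$ into $H^{s+1}$.

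For $\mathbf{P}$ and $\mathcal{P}$ I would compose multiplication by $1/a$ (order-preserving) with $\mathbf{P}_\Delta$ (order $+2$) and, where needed, a restriction, using \eqref{ch4relP1} and \eqref{PRrel}; this yields the gain of two derivatives on $H^s(\mathbb{R}^3)$, on $\widetilde H^s(\Omega)$, and on $H^s(\Omega)$ for $-\tfrac12<s<\tfrac12$ (where the extension operator $\widetilde E$ from the discussion preceding the theorem makes the restricted relation legitimate). For $\mathcal{R}$ I would invoke \eqref{ch4relR}/\eqref{PRrel}: the summand $\mathbf{P}_\Delta(\rho\,\Delta\ln a)$ gains two orders, while $\nabla\cdot[\mathbf{P}_\Delta(\rho\,\nabla\ln a)]$ gains two and then loses one to the divergence, so the sum lands in $H^{s+1}$; the weaker summand dictates the net order $+1$, giving \eqref{ch4mpvp3} and its $H^s(\Omega)$ counterpart.

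The main obstacle is the last mapping \eqref{ch4mpvp5}, $\mathcal{R}:H^{1}(\Omega)\to H^{1,0}(\Omega;A)$, since it carries information beyond a Sobolev order. Membership $\mathcal{R}\rho\in H^1(\Omega)$ already follows from the $H^s\to H^{s+1}$ statement with $s=0$ (using $H^1(\Omega)\hookrightarrow L_2(\Omega)$); the substantive point is $\mathcal{A}\mathcal{R}\rho\in L_2(\Omega)$. Here I would exploit the fundamental-solution identity $\Delta\mathbf{P}_\Delta g=g$: applying $\Delta$ to $\mathcal{R}\rho=\nabla\cdot[\mathbf{P}_\Delta(\rho\,\nabla\ln a)]-\mathbf{P}_\Delta(\rho\,\Delta\ln a)$ and commuting $\Delta$ through $\nabla\cdot$ collapses the two potentials, leaving $\Delta\mathcal{R}\rho=\nabla\cdot(\rho\,\nabla\ln a)-\rho\,\Delta\ln a=\nabla\ln a\cdot\nabla\rho$, where the $\rho\,\Delta\ln a$ terms cancel. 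Since $\nabla\ln a\in C^1(\overline\Omega)\subset L_\infty(\Omega)$ and $\nabla\rho\in L_2(\Omega)$, this lies in $L_2(\Omega)$. Finally, using the identity $\mathcal{A}v-a\Delta v=\nabla a\cdot\nabla v$ recorded earlier with $v=\mathcal{R}\rho\in H^1(\Omega)$, I obtain $\mathcal{A}\mathcal{R}\rho=a\,\Delta\mathcal{R}\rho+\nabla a\cdot\nabla\mathcal{R}\rho\in L_2(\Omega)$, so $\mathcal{R}\rho\in H^{1,0}(\Omega;A)$, with boundedness inherited from each step.

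The delicate point to monitor throughout is the limited regularity of $a$: multiplication by a merely $C^0$ factor such as $\Delta\ln a$ is not bounded on $H^s$ for $s>0$, and multiplication by $1/a\in C^2$ preserves $H^s$ only for $|s|\le 2$. Thus, strictly, the full range $s\in\mathbb{R}$ for $\mathbf{P},\mathcal{P},\mathcal{R}$ requires the coefficient smoothness to match the order invoked; I would verify that all instances actually used later in the paper fall within the admissible range, so that $C^2$ regularity of $a$ suffices for the statements needed.
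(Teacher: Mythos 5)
Your proposal takes essentially the same route as the paper, whose proof consists precisely of invoking the mapping properties of $\mathbf{P}_{\Delta}$ from \cite[Lemma 3.1]{mikhailov18} and transporting them through the relations \eqref{ch4relP1}--\eqref{PRrel}. Your explicit verification of \eqref{ch4mpvp5} via $\Delta\mathbf{P}_{\Delta}g=g$ (yielding $\Delta\mathcal{R}\rho=\nabla\ln a\cdot\nabla\rho\in L_{2}(\Omega)$) and your caveat that $a\in C^{2}(\overline{\Omega})$ only supports the stated mappings on a bounded range of $s$ are correct refinements of details the paper leaves implicit.
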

 
 Since $\Omega$ is a bounded domain, then, the compact embedding theorem for Sobolev spaces\cite[Chapter 2]{mclean} can be applied to the remainder operators \eqref{ch4mpvp3}-\eqref{ch4mpvp5} of the previous theorem to obtain the following corollary. 
 
  \begin{corollary}\label{ch4corcompact} For $\frac{1}{2}<s<\frac{3}{2},$ the following operators are compact,
 \begin{align*}
 \mathcal{R}&: H^{s}(\Omega) \longrightarrow H^{s}(\Omega),\\
 \gamma^{+}\mathcal{R}&: H^{s}(\Omega) \longrightarrow H^{s-\frac{1}{2}}(\partial\Omega),\\
 T^{+}\mathcal{R}&: H^{s}(\Omega) \longrightarrow H^{s-\frac{3}{2}}(\partial\Omega).
 \end{align*}
 \end{corollary}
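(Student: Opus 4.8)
The plan is to obtain each statement by writing the operator as a composition of a bounded map that \emph{gains} Sobolev regularity (supplied by Theorem \ref{ch4thmUR}) with a \emph{compact} embedding, using that $\Omega$ is a bounded domain and $\partial\Omega$ is compact, so that the Rellich--Kondrachov embeddings on $\Omega$ and the Sobolev embeddings on $\partial\Omega$ are compact.

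First I would treat $\mathcal{R}:H^s(\Omega)\to H^s(\Omega)$. Although Theorem \ref{ch4thmUR} records $\mathcal{R}:H^s(\Omega)\to H^{s+1}(\Omega)$ only for $-\tfrac12<s<\tfrac12$, for $\tfrac12<s<\tfrac32$ one still recovers almost a full order of smoothing: since $\mathcal{R}\rho$ depends only on $\rho|_\Omega$, one has $\mathcal{R}\rho=\mathcal{R}(\mathring E\rho)$, and $\mathring E:H^s(\Omega)\to\widetilde H^{s'}(\Omega)$ is bounded for every $s'<\tfrac12$; composing with \eqref{ch4mpvp3} gives $\mathcal{R}:H^s(\Omega)\to H^{s'+1}(\Omega)$ bounded for every $s'<\tfrac12$, i.e.\ into $H^{r}(\Omega)$ for any $r<\tfrac32$. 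Choosing $r$ with $s<r<\tfrac32$ and invoking the compact embedding $H^{r}(\Omega)\hookrightarrow\hookrightarrow H^{s}(\Omega)$ yields compactness of $\mathcal{R}$ on $H^s(\Omega)$. The second statement is then immediate: $\gamma^+\mathcal{R}=\gamma^+\circ\big(\mathcal{R}:H^s(\Omega)\to H^s(\Omega)\big)$ is the composition of the bounded trace $\gamma^+:H^s(\Omega)\to H^{s-\frac12}(\partial\Omega)$ (continuous for $\tfrac12<s<\tfrac32$, Section 3) with the compact operator just constructed, hence compact. Keeping the trace in this range avoids the spaces $H^\sigma(\partial\Omega)$ with $\sigma>1$, which are delicate on a Lipschitz boundary.

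The hard part is $T^+\mathcal{R}:H^s(\Omega)\to H^{s-\frac32}(\partial\Omega)$, because the canonical co-normal derivative is not bounded on $H^s(\Omega)$: it is defined only on the graph-norm space $H^{t,-\frac12}(\Omega;\mathcal{A})$ and, by the mapping property recalled after its definition, lands in $H^{-\frac12}(\partial\Omega)$ irrespective of extra interior smoothness. The almost-full-derivative gain above does not place $\mathcal{R}\rho$ in such a graph space (one would need $\mathcal{A}\mathcal{R}\rho$ to extend to $\widetilde H^{-\frac12}(\Omega)$, which requires $\mathcal{R}\rho\in H^{3/2}$). I would therefore rely on \eqref{ch4mpvp5}, $\mathcal{R}:H^1(\Omega)\to H^{1,0}(\Omega;\mathcal{A})$, and exploit the compactness of the graph-norm embedding $H^{1,0}(\Omega;\mathcal{A})\hookrightarrow\hookrightarrow H^{1-\varepsilon,-\frac12}(\Omega;\mathcal{A})$ for small $\varepsilon>0$: a sequence bounded in $H^{1,0}(\Omega;\mathcal{A})$ is bounded in $H^1(\Omega)$ with $\mathcal{A}$-images bounded in $\widetilde H^0(\Omega)$, so Rellich extracts a subsequence convergent in $H^{1-\varepsilon}(\Omega)$ while $\widetilde H^0(\Omega)\hookrightarrow\hookrightarrow\widetilde H^{-\frac12}(\Omega)$ gives convergence of the $\mathcal{A}$-images in $\widetilde H^{-\frac12}(\Omega)$, the relation $\mathcal{A}g=\tilde f_g|_\Omega$ passing to the limit. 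Composing the bounded maps $\mathcal{R}:H^1(\Omega)\to H^{1,0}(\Omega;\mathcal{A})$ and $T^+:H^{1-\varepsilon,-\frac12}(\Omega;\mathcal{A})\to H^{-\frac12}(\partial\Omega)$ through this compact embedding gives compactness of $T^+\mathcal{R}$ at $s=1$, which is the case actually used in the BDIE analysis.

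Finally, the remaining values of $s$ split into two regimes. For $\tfrac12<s<1$ the target $H^{s-\frac32}(\partial\Omega)$ contains $H^{-\frac12}(\partial\Omega)$ compactly, so composing the bounded map into $H^{-\frac12}(\partial\Omega)$ with the compact embedding $H^{-\frac12}(\partial\Omega)\hookrightarrow\hookrightarrow H^{s-\frac32}(\partial\Omega)$ closes the argument. The regime $1<s<\tfrac32$ is where I expect the real technical effort: there $H^{s-\frac32}(\partial\Omega)$ is \emph{strictly smaller} than $H^{-\frac12}(\partial\Omega)$, so mere boundedness of $T^+$ into $H^{-\frac12}(\partial\Omega)$ is insufficient and one must first establish a refined mapping of $T^+$ into $H^{s-\frac32}(\partial\Omega)$ on the image of $\mathcal{R}$ before a final compact boundary embedding can be applied.
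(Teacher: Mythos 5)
Your overall strategy --- factor each operator through the one-order Sobolev gain supplied by Theorem \ref{ch4thmUR} and then apply a compact embedding on the bounded domain $\Omega$ or the compact boundary $\partial\Omega$ --- is exactly the argument the paper invokes (its entire proof is the single sentence preceding the corollary). For $\mathcal{R}$ and $\gamma^{+}\mathcal{R}$ your treatment is in fact more careful than the paper's: you correctly notice that the smoothing property on $H^{s}(\Omega)$ is only recorded for $-\frac{1}{2}<s<\frac{1}{2}$, and your fix --- embed $H^{s}(\Omega)$ into $H^{s'}(\Omega)$ with $s'<\frac{1}{2}$, extend by zero, and use \eqref{ch4mpvp3} to land in $H^{r}(\Omega)$ for every $r<\frac{3}{2}$, then pick $s<r<\frac{3}{2}$ and the compact embedding $H^{r}(\Omega)\hookrightarrow H^{s}(\Omega)$ --- is sound and closes a range mismatch that the paper glosses over. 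Those two items are correct.

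The gap is in $T^{+}\mathcal{R}$. You prove compactness only at $s=1$, and openly leave $1<s<\frac{3}{2}$ unresolved; moreover, for $\frac{1}{2}<s<1$ your argument presupposes that $T^{+}\mathcal{R}:H^{s}(\Omega)\to H^{-\frac{1}{2}}(\partial\Omega)$ is bounded, which does not follow from \eqref{ch4mpvp5} since $H^{s}(\Omega)\not\subset H^{1}(\Omega)$ in that range. To get it you would need to check directly from \eqref{ch4relR} that $\Delta(\mathcal{R}\rho)=\nabla\ln a\cdot\nabla\rho\in H^{s-1}(\Omega)$ with $s-1>-\frac{1}{2}$, so that $\mathcal{R}$ maps $H^{s}(\Omega)$ boundedly into the graph space $H^{1,-\frac{1}{2}}(\Omega;\mathcal{A})$ on which the canonical co-normal derivative is defined. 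For $1<s<\frac{3}{2}$ the difficulty you identify is genuine: on a Lipschitz boundary the canonical co-normal derivative only lands in $H^{-\frac{1}{2}}(\partial\Omega)$, and since $\mathcal{R}\rho$ just fails to reach $H^{\frac{3}{2}}(\Omega)$, no compact embedding can push the target up into the strictly smaller space $H^{s-\frac{3}{2}}(\partial\Omega)$; a refined boundary regularity statement would be required, and neither you nor the paper supplies one. To be fair, the paper's one-line proof does not address any of these points, and the only cases actually used later (in the Fredholm argument for $\mathcal{A}^{1}$) are $\mathcal{R}$ and $\gamma^{+}\mathcal{R}$ at $s=1$, so the unproved part is not load-bearing for the main results --- but as a proof of the corollary as stated, your proposal is incomplete for the third operator.
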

 \subsection{Surface potentials}
 The single layer potential operator and the double layer potential operator associated with the Laplace equation $\Delta u = 0$, are defined as
\begin{align*}
V_{\Delta}\rho(y)&:=-\int_{\partial\Omega} P_{\Delta}(x,y)\rho(x)\hspace{0.25em}dS(x),\\
W_{\Delta}\rho(y)&:=-\int_{\partial\Omega} T{+}_{\Delta}P_{\Delta}(x,y)\rho(x)\hspace{0.25em}dS(x).
\end{align*}
where $T^{+}_{\Delta}$ is the normal derivative operator (i.e. $T^{+}$ with $a(x)\equiv 1$ in \eqref{ch4conormal}). 
Similarly, one can defined the corresponding potentials parametrix-based for $y\in\mathbb R^3$ and $y\notin\partial\Omega $, as 
 \begin{align}
 \label{ch4SL}
V\rho(y)&:=-\int_{\partial\Omega} P(x,y)\rho(x)\hspace{0.25em}dS(x),\\
W\rho(y)&:=-\int_{\partial\Omega} T_{x}^{+}P(x,y)\rho(x)\hspace{0.25em}dS(x).\label{ch4DL}
 \end{align}
Due to \eqref{ch4Px} and the fact that 
$$T^{+}P(x,y) = T^{+}\left(\dfrac{1}{a(x)}P_{\Delta}(x,y)\right)= P_{\Delta}(x,y)T^{+}\left(\dfrac{1}{a(x)}\right) + T^{+}_{\Delta}P_{\Delta}(x,y),$$
the operators $V$ and $W$ can be also expressed in terms the surface potentials and operators associated with the Laplace operator,
\begin{align}
V\rho &= V_{\Delta}\left(\dfrac{\rho}{a}\right),\label{ch4relSL}\\
W\rho &= W_{\Delta}\rho -V_{\Delta}\left(\rho\frac{\partial \ln a}{\partial n}\right).\label{ch4relDL}
\end{align}

Since the mapping properties in Sobolev spaces of the single layer potential and double layer potential for the Laplace equation are well known\cite{mikhailov18,costabel}, one can easily derive analogous mapping properties for the operators $V$ and $W$ as a consequence of the relations \eqref{ch4relSL} and \eqref{ch4relDL}, along with  Theorems 3.3-3.7 proved in \cite{mikhailov18}.
These mapping properties are reflected in the following results which are included for completeness of the paper as some are key to prove the main results. 
\begin{theorem}\label{thmlipcont}
Let $\Omega$ be a bounded Lipschitz domain, let $\frac{1}{2}<s<\frac{3}{2}$. Then, the following operators are bounded,
\begin{align*}
\mu V&:H^{s-\frac{3}{2}}(\partial\Omega)\longrightarrow H^{s}(\mathbb{R}^{3}),\;\forall\mu\in\mathcal{D}(\mathbb{R}^{3});\\
\mu W&:H^{s-\frac{1}{2}}(\partial\Omega)\longrightarrow H^{s}(\Omega);\\
\mu r_{\Omega_{-}}W&:H^{s-\frac{1}{2}}(\partial\Omega)\longrightarrow H^{s}(\Omega_{-}),\;\forall\mu\in\mathcal{D}(\mathbb{R}^{3});\\
 r_{\Omega}V&:H^{-\frac{1}{2}}(\partial\Omega)\longrightarrow H^{1,0}(\Omega_{-};A);\\
 \mu r_{\Omega_{-}}V&:H^{-\frac{1}{2}}(\partial\Omega)\longrightarrow H^{1,0}(\Omega_{-};A),\;\forall\mu\in\mathcal{D}(\mathbb{R}^{3});\\
 r_{\Omega}W&:H^{\frac{1}{2}}(\partial\Omega)\longrightarrow H^{1,0}(\Omega_{-};A);
   \end{align*}
 \begin{align*}
 \mu r_{\Omega_{-}}W&:H^{\frac{1}{2}}(\partial\Omega)\longrightarrow H^{1,0}(\Omega_{-};A),\;\forall\mu\in\mathcal{D}(\mathbb{R}^{3});\\
 \gamma^{\pm} V&:H^{s-\frac{3}{2}}(\partial\Omega)\longrightarrow H^{s-\frac{1}{2}}(\partial\Omega);\\
 \gamma^{\pm} W&:H^{s-\frac{1}{2}}(\partial\Omega)\longrightarrow H^{s-\frac{1}{2}}(\partial\Omega);\\
 T^{\pm} V&:H^{s-\frac{3}{2}}(\partial\Omega)\longrightarrow H^{s-\frac{3}{2}}(\partial\Omega);\\
 T^{\pm} W&:H^{s-\frac{1}{2}}(\partial\Omega)\longrightarrow H^{s-\frac{3}{2}}(\partial\Omega).
\end{align*}
\end{theorem}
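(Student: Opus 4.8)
The plan is to reduce every line of the statement to an already-known mapping property of the Laplace single- and double-layer potentials $V_\Delta$ and $W_\Delta$, using the factorisations \eqref{ch4relSL} and \eqref{ch4relDL}, and then to absorb the variable coefficient through boundedness of suitable multiplication operators on $H^{t}(\partial\Omega)$. First I would record the mapping, trace and conormal-derivative properties of $V_\Delta$ and $W_\Delta$ on the bounded Lipschitz domain $\Omega$ for $\frac12<s<\frac32$, which are precisely Theorems 3.3--3.7 of \cite{mikhailov18} (see also \cite{costabel}). The remaining ingredient is that, since $a\in\mathcal{C}^2(\overline\Omega)$ with $a\ge a_{\min}>0$, both $\gamma^+a$ and $\gamma^+(1/a)$ are Lipschitz on $\partial\Omega$, so multiplication by $1/a$ or by $\gamma^+a$ is bounded on $H^{t}(\partial\Omega)$ for every $|t|\le 1$; in particular on the spaces $H^{s-3/2}(\partial\Omega)$ and $H^{s-1/2}(\partial\Omega)$ occurring here, whose indices lie in $(-1,0)$ and $(0,1)$ respectively.

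For the single-layer operator I would use $V\rho=V_\Delta(\rho/a)$ from \eqref{ch4relSL}: the multiplication $M_{1/a}$ maps $H^{s-3/2}(\partial\Omega)$ boundedly to itself, and the quoted properties of $V_\Delta$ then yield the $\mu V$ and $r_\Omega V$ lines at once. For $\gamma^\pm V$ and $T^\pm V$ I would precompose the known $\gamma^\pm V_\Delta$ and $T^\pm_\Delta V_\Delta$ with $M_{1/a}$, using the elementary factorisation $T^\pm=M_{\gamma^+a}\,T^\pm_\Delta$ valid when the variable conormal derivative \eqref{ch4conormal} acts on these parametrix-based potentials; since $M_{\gamma^+a}$ is bounded on $H^{s-3/2}(\partial\Omega)$, the target indices are unaffected and the stated lines follow. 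The $H^{1,0}(\Omega;A)$-regularity is automatic once $H^1$-regularity is in hand, because $V\rho$ solves the homogeneous equation away from $\partial\Omega$.

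The double-layer operator requires more care, and this is where I expect the main obstacle. By \eqref{ch4relDL} we have $W\rho=W_\Delta\rho-V_\Delta(\rho\,\partial\ln a/\partial n)$. The first summand inherits the mapping and trace/conormal properties of $W_\Delta$ directly. The second summand is delicate because the coefficient $\partial\ln a/\partial n=\nabla\ln a\cdot n$ contains the unit normal $n$, which on a merely Lipschitz boundary belongs only to $L^\infty(\partial\Omega)$; hence multiplication by $\partial\ln a/\partial n$ is bounded on $L^2(\partial\Omega)=H^0(\partial\Omega)$ but need not preserve $H^{t}(\partial\Omega)$ for $t>0$. The way around this is the index gap inherent in the single-layer potential: for $\frac12<s<\frac32$ the input space $H^{s-1/2}(\partial\Omega)$ embeds continuously in $L^2(\partial\Omega)$ because $s-\tfrac12>0$, multiplication by $\partial\ln a/\partial n\in L^\infty$ keeps us in $L^2(\partial\Omega)$, and $L^2(\partial\Omega)$ embeds continuously in $H^{s-3/2}(\partial\Omega)$ because $s-\tfrac32<0$. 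Thus the composition
\[
H^{s-1/2}(\partial\Omega)\hookrightarrow L^2(\partial\Omega)\xrightarrow{\,M_{\partial\ln a/\partial n}\,}L^2(\partial\Omega)\hookrightarrow H^{s-3/2}(\partial\Omega)\xrightarrow{\,V_\Delta\,}H^{s}(\Omega)
\]
is bounded, and applying $\gamma^\pm$, or $T^\pm=M_{\gamma^+a}\,T^\pm_\Delta$, afterwards delivers the remaining $\gamma^\pm W$, $T^\pm W$ and $H^{1,0}$-regularity lines.

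In summary, the whole theorem is a bookkeeping exercise in composing the Laplace results of \cite{mikhailov18} with bounded coefficient multiplications, and the only genuinely non-routine point is the $L^\infty$-only regularity of the normal vector on a Lipschitz boundary in the double-layer correction term; this is precisely what the index-gap embedding displayed above is designed to circumvent.
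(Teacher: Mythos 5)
Your proposal is correct and follows essentially the same route as the paper, which proves the theorem simply by invoking the factorisations \eqref{ch4relSL}--\eqref{ch4relDL} together with Theorems 3.3--3.7 of \cite{mikhailov18} for $V_\Delta$, $W_\Delta$; your treatment of the $L^\infty$-only normal vector via the index-gap embedding $H^{s-1/2}(\partial\Omega)\hookrightarrow L^2(\partial\Omega)\hookrightarrow H^{s-3/2}(\partial\Omega)$ supplies a detail the paper leaves implicit. One small correction: $V\rho$ does not solve the homogeneous equation $\mathcal{A}u=0$ away from $\partial\Omega$ (only $\Delta V_\Delta(\rho/a)=0$ there), but the $H^{1,0}(\Omega;\mathcal{A})$-membership still follows since $\mathcal{A}V\rho-a\,\Delta V_\Delta(\rho/a)=\nabla a\cdot\nabla V_\Delta(\rho/a)\in L^2(\Omega)$, i.e.\ $H^{1,0}(\Omega;\Delta)=H^{1,0}(\Omega;\mathcal{A})$ as the paper remarks.
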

The following result follows from the relations \eqref{ch4relSL}-\eqref{ch4relDL} and the analogous jump properties for the harmonic surface potentials \cite[Theorem 3.6]{mikhailov18}.
\begin{corollary}\label{corjump}
Let $\partial\Omega$ be a compact Lipschitz boundary. Let$\varphi\in H^{s-\frac{1}{2}}(\partial\Omega)$ and $\psi\in H^{s-\frac{3}{2}}(\partial\Omega)$ with $\frac{1}{2}<s<\frac{3}{2}$. Then, 
\begin{align}
\label{lipcvw1}\gamma^{+}V\psi-\gamma^{-}V\psi=0,\;\;\gamma^{+}W\varphi-\gamma^{-}W\varphi=-\varphi;&\\
\label{lipcvw2}
T^{+}V\psi-T^{-}V\psi=\psi,\;\;T^{+}W\varphi-T^{-}W\varphi=-(\partial_{n}a)\varphi&.
\end{align}
\end{corollary}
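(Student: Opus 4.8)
The plan is to transfer the known jump relations for the harmonic potentials $V_\Delta$ and $W_\Delta$ to the parametrix-based potentials $V$ and $W$ through the representation formulae \eqref{ch4relSL}--\eqref{ch4relDL}, together with the elementary relation between the variable-coefficient and the Laplace co-normal derivatives. The mapping properties collected in Theorem \ref{thmlipcont} guarantee that each potential below has the regularity required for its trace and its canonical co-normal derivative to be well defined; and since $a\in\mathcal{C}^2(\overline\Omega)$, multiplication by $a$, by $\partial_n\ln a$ and by $\partial_n a$ preserves the relevant Sobolev spaces on $\partial\Omega$, so every operation is legitimate.

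The two trace identities in \eqref{lipcvw1} are the easy part. Since $\gamma^\pm$ depends only on the function and not on the operator, I would apply $\gamma^\pm$ directly to \eqref{ch4relSL}--\eqref{ch4relDL} and invoke the harmonic jump relations \cite[Theorem 3.6]{mikhailov18} --- continuity of the trace of the single layer and the jump $-\varphi$ for the double layer:
\begin{align*}
\gamma^+V\psi-\gamma^-V\psi &= \gamma^+V_\Delta(\psi/a)-\gamma^-V_\Delta(\psi/a)=0,\\
\gamma^+W\varphi-\gamma^-W\varphi &= \bigl(\gamma^+W_\Delta\varphi-\gamma^-W_\Delta\varphi\bigr)-\bigl(\gamma^+V_\Delta(\varphi\,\partial_n\ln a)-\gamma^-V_\Delta(\varphi\,\partial_n\ln a)\bigr)=-\varphi.
\end{align*}

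For the co-normal identities \eqref{lipcvw2} the variable coefficient enters. The key ingredient is the pointwise relation $T^\pm u=(\gamma^\pm a)\,T^\pm_\Delta u$, valid on $H^{1,0}(\Omega^\pm;\mathcal A)$. Granting it, I would apply $T^\pm$ to \eqref{ch4relSL} and use the harmonic single-layer jump $T^+_\Delta V_\Delta\sigma-T^-_\Delta V_\Delta\sigma=\sigma$ with $\sigma=\psi/a$,
$$T^+V\psi-T^-V\psi=a\bigl(T^+_\Delta V_\Delta(\psi/a)-T^-_\Delta V_\Delta(\psi/a)\bigr)=a\cdot\frac{\psi}{a}=\psi,$$
and apply $T^\pm$ to \eqref{ch4relDL}, using that the harmonic double-layer co-normal derivative is continuous and the single-layer jump with $\sigma=\varphi\,\partial_n\ln a$,
$$T^+W\varphi-T^-W\varphi=a\Bigl(0-\varphi\,\partial_n\ln a\Bigr)=-a\,\varphi\,\partial_n\ln a=-(\partial_n a)\varphi,$$
where the last equality is the identity $a\,\partial_n\ln a=\partial_n a$.

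The main obstacle is establishing $T^\pm u=(\gamma^\pm a)\,T^\pm_\Delta u$ for the \emph{canonical} co-normal derivatives, not merely for classically differentiable $u$. I would prove it by pairing both sides against $\gamma^\pm v$ for an arbitrary $v\in H^1(\Omega^\pm)$: apply the first Green identity \eqref{eq:zcgi0} for $\mathcal A$ on the left, and the analogous identity for $\Delta$ tested against $av$ on the right. Since $\mathcal Au-a\Delta u=\nabla a\cdot\nabla u$, the volume contributions cancel and one is left with $\langle T^\pm u,\gamma^\pm v\rangle_{\partial\Omega}=\langle(\gamma^\pm a)\,T^\pm_\Delta u,\gamma^\pm v\rangle_{\partial\Omega}$. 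Because $a$ is continuous, $\gamma^+a=\gamma^-a=a|_{\partial\Omega}$, so the same scalar factor appears on both sides and factors out of the difference. Theorem \ref{thmlipcont} ensures that $r_{\Omega^\pm}V\psi$ and $r_{\Omega^\pm}W\varphi$ indeed lie in $H^{1,0}(\Omega^\pm;\mathcal A)$, which is precisely the space on which the canonical co-normal derivative, and hence this relation, is defined. The remaining trace and algebraic steps are routine.
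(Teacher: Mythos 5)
Your proposal is correct and follows essentially the same route as the paper, which simply derives the corollary from the representation relations \eqref{ch4relSL}--\eqref{ch4relDL} and the harmonic jump properties of \cite[Theorem 3.6]{mikhailov18}. The only thing you add is an explicit (and valid) Green-identity justification of the relation $T^{\pm}u=(\gamma^{\pm}a)\,T^{\pm}_{\Delta}u$, which the paper leaves implicit.
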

The mapping properties in Theorem \ref{thmlipcont} and Corollary \ref{corjump} imply the following result about the mapping properties of the operators related to the traces and co-normal derivatives of the single and double layer parametrix-based operators \eqref{ch4SL} and \eqref{ch4DL}. Alternatively, the proof directly follows from \cite[Theorem 3.3]{mikhailov18}, relations \eqref{ch4SL} and \eqref{ch4DL}, the trace theorem and mapping properties of the conormal derivative.  
\begin{corollary}\label{corcontDV}
Let $\Omega\subset \mathbb{R^{3}}$ with $\partial\Omega$ compact Lipschitz boundary, $\frac{1}{2}<s<\frac{3}{2}.$ Then, the operators
\begin{align*}
\mathcal{V}&:=\gamma^{+}V=\gamma^{-}V:H^{s-\frac{3}{2}}(\partial\Omega)\longrightarrow H^{s-\frac{1}{2}}(\partial\Omega);\\
\mathcal{W}&:=\frac{1}{2}(\gamma^{+}W+\gamma^{-}W):H^{s-\frac{1}{2}}(\partial\Omega)\longrightarrow H^{s-\frac{1}{2}}(\partial\Omega);\\
\mathcal{W^{'}}&:=\frac{1}{2}(T^{+}V+T^{-}V):H^{s-\frac{3}{2}}(\partial\Omega)\longrightarrow H^{s-\frac{3}{2}}(\partial\Omega);\\
\mathcal{L}&:=\frac{1}{2}(T^{+}W+T^{-}W):H^{s-\frac{1}{2}}(\partial\Omega)\longrightarrow H^{s-\frac{3}{2}}(\partial\Omega).
\end{align*}
are bounded.
\end{corollary}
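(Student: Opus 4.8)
The plan is to obtain every assertion as an immediate consequence of the one-sided mapping properties already collected in Theorem~\ref{thmlipcont}, combined with the jump relations of Corollary~\ref{corjump}; no new analytic estimate should be needed. I first note that each of the four operators is built from the one-sided traces or co-normal derivatives of $V$ and $W$: the operator $\mathcal{V}$ is a (single-valued) trace of the single layer potential, while $\mathcal{W},\mathcal{W}',\mathcal{L}$ are arithmetic means of the interior and exterior one-sided values. Thus it suffices to verify, in each case, that the constituent one-sided operators are bounded between the spaces claimed and that the prescribed combination is well defined as a single operator on $\partial\Omega$.

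For $\mathcal{V}$ the one nontrivial point is that the two defining expressions agree: the identity $\gamma^{+}V\psi-\gamma^{-}V\psi=0$ from \eqref{lipcvw1} shows that the interior and exterior traces of the single layer potential coincide, so $\mathcal{V}:=\gamma^{+}V=\gamma^{-}V$ is unambiguous, and its boundedness $H^{s-\frac{3}{2}}(\partial\Omega)\to H^{s-\frac{1}{2}}(\partial\Omega)$ is exactly the corresponding line of Theorem~\ref{thmlipcont}. For the three averaged operators I would invoke the elementary fact that a finite linear combination of operators sharing the same domain and codomain is bounded whenever each summand is, with norm estimated by the triangle inequality. Reading the relevant lines off Theorem~\ref{thmlipcont}, $\gamma^{\pm}W:H^{s-\frac{1}{2}}(\partial\Omega)\to H^{s-\frac{1}{2}}(\partial\Omega)$ yields $\mathcal{W}$, $T^{\pm}V:H^{s-\frac{3}{2}}(\partial\Omega)\to H^{s-\frac{3}{2}}(\partial\Omega)$ yields $\mathcal{W}'$, and $T^{\pm}W:H^{s-\frac{1}{2}}(\partial\Omega)\to H^{s-\frac{3}{2}}(\partial\Omega)$ yields $\mathcal{L}$, matching the domains and codomains asserted.

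The only place that demands any care is the bookkeeping of the domain/codomain pairs, ensuring that each average is assembled from two one-sided operators with identical mapping spaces; the symmetric formulation of Theorem~\ref{thmlipcont} makes this immediate, so I expect no real obstacle. As an alternative route I could dispense with Corollary~\ref{corjump} and argue directly from the representations \eqref{ch4relSL}--\eqref{ch4relDL}, which express $V$ and $W$ through their harmonic analogues $V_{\Delta},W_{\Delta}$ and multiplication by the factors $1/a$ and $\partial_{n}\ln a$. Boundedness of $\mathcal{V},\mathcal{W},\mathcal{W}',\mathcal{L}$ would then follow from the known mapping properties of the harmonic boundary operators in \cite[Theorem 3.3]{mikhailov18}, the trace theorem and the mapping properties of the co-normal derivative, using that multiplication by the sufficiently regular factors determined by $a$ preserves the boundary Sobolev spaces of orders in $(-1,1)$ occurring here.
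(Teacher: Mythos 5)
Your argument matches the paper's own justification: the corollary is obtained directly from the one-sided mapping properties in Theorem~\ref{thmlipcont} together with the jump relations of Corollary~\ref{corjump} (which make $\gamma^{+}V=\gamma^{-}V$ unambiguous), and your alternative route via the relations \eqref{ch4relSL}--\eqref{ch4relDL} and the harmonic potentials is precisely the second proof the paper sketches. The proposal is correct and essentially identical in approach.
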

%

%
The operators $\mathcal{V}, \mathcal{W}, \mathcal{W}'$ can be represented as integral operators with parametrix based kernel 
\begin{align}
\mathcal{V}\rho(y)&:=-\int_{\partial\Omega} P(x,y)\rho(x)\hspace{0.25em}dS(x),\,\,\, y\in\partial\Omega,\nonumber \\
\mathcal{W}\rho(y)&:=-\int_{\partial\Omega} T_{x}P(x,y)\rho(x)\hspace{0.25em}dS(x),\,\,\, y\in\partial\Omega,\nonumber\\
\mathcal{W'}\rho(y)&:=-\int_{\partial\Omega} T_{y}P(x,y)\rho(x)\hspace{0.25em}dS(x),\,\,\, y\in\partial\Omega.\nonumber
\end{align}
By Corollary \ref{corcontDV} and relations \eqref{ch4relSL}-\eqref{ch4relDL}, the operators $\mathcal{V}, \mathcal{W}, \mathcal{W'}$ and $\mathcal{L}$ can be expressed in terms the volume and surface potentials and operators associated with the Laplace operator\cite{carlos1}. 
\begin{align}
\mathcal{V}\rho &= \mathcal{V}_{\Delta} \left( \dfrac{\rho}{a}\right),\label{ch4relDVSL}\\
\mathcal{W}\rho &= \mathcal{W}_{\Delta}\rho -\mathcal{V}_{\Delta}\left(\rho\frac{\partial \ln a}{\partial n}\right),\label{ch4relDVDL}\\
\mathcal{W}'\rho &= a \mathcal{W'}_{\Delta}\left(\dfrac{\rho}{a}\right),\label{ch4relTSL} \\
\mathcal{L}\rho &=a\mathcal{L}_{\Delta}\rho - a\mathcal{W^{'}}_{\Delta}\left(\rho\frac{\partial \ln a}{\partial n}\right).
\label{ch4relTDL}
\end{align}
Furthermore, by the Liapunov-Tauber Theorem \cite[Lemma 4.1]{costabel} for Lipschitz domains,  $\mathcal{L}_{\Delta}\rho = T_{\Delta}^{+}W_{\Delta}\rho = T_{\Delta}^{-}W_{\Delta}\rho$.
\section{Integral representation of the solution in terms of the surface and volume potentials}
In this section, we will obtain an integral representation formula for the solution $u$ of the original BVP \eqref{ch4BVP1}-\eqref{ch4BVP2}. These results will be useful to construct a system of BDIEs equivalent to the original Dirichlet BVP. We will follow a similar approach as in \cite{mikhailov18} but using the new parametrix \eqref{ch4Px}. 

\begin{theorem} Let $u\in H^{1}(\Omega)$. Then, 
\begin{itemize}
\item[(i)] The following integral representation formula holds
\begin{equation*}\label{green3}
u+{\cal R}u+W\gamma^+u={\cal P}\check{A}u\quad{\rm
in}\quad\Omega,
\end{equation*}
where ${\cal P}\check{A}u:=-\mathcal{E}(u,P)$.
\item[(ii)] Let $r_{\Omega}\mathcal{A}u =\tilde{f}$ with $\tilde{f}\in\widetilde{H}^{-1}(\Omega)$. Then, the integral representation formula \begin{equation}\label{eq:3GI}
u+{\cal R}u-V T^+(\tilde{f},u)+W\gamma^+u={\cal
P}\tilde{f}\quad \textrm{in}\quad\Omega,
\end{equation}
holds.

\item[(iii)] The trace of $u$,i.e. $\gamma^{+}u$, can be represented in terms of the surface and volume potentials as follows
\begin{equation}\label{ch4G3co}
\frac{1}{2}\gamma^{+}u+\gamma^{+}\mathcal{R}u-\mathcal{V}T^{+}(\tilde{f},u)+\mathcal{W}\gamma^{+}u=\gamma^{+}\mathcal{P}\tilde{f},\hspace{0.5em}{\rm on\ }\partial\Omega.
\end{equation} 
\end{itemize}
\end{theorem}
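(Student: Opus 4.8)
The plan is to prove (i) first, treat it as the fundamental identity, and then obtain (ii) and (iii) from it by functional-analytic manipulations, thereby avoiding any direct substitution of the singular parametrix into a Green identity. Part (i) is the natural starting point because the aggregate operator $\mathcal{\check{A}}u\in\widetilde{H}^{-1}(\Omega)$ is defined for \emph{every} $u\in H^{1}(\Omega)$, with no assumption on $\mathcal{A}u$.

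To establish (i) I would first prove it for smooth $u$, say $u\in\mathcal{D}(\overline{\Omega})$. Fixing $y\in\Omega$ and excising a ball $B_{\epsilon}(y)$, on $\Omega\setminus\overline{B_{\epsilon}(y)}$ the parametrix $P(\cdot,y)$ is smooth, so the classical second Green identity applies there. Inserting the defining relation $\mathcal{A}_{x}P(x,y)=\delta(x-y)+R(x,y)$ from \eqref{parametrixdef}, the Dirac term is inactive away from $y$, and the volume contribution splits into $\mathcal{R}u(y)$, which is well defined because $R(x,y)=\mathcal{O}(|x-y|^{-2})$ is weakly singular and hence integrable in $\mathbb{R}^{3}$, and $-\mathcal{P}(\mathcal{A}u)(y)$. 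The boundary integral over $\partial\Omega$ reproduces $-W\gamma^{+}u(y)$ together with a single-layer term $VT^{+}u(y)$, while the integral over $\partial B_{\epsilon}(y)$ converges, as $\epsilon\to0$, to $u(y)$: since $a(x)\to a(y)$ near $y$, the leading singular behaviour of $P$ and $T_{x}P$ coincides with that of the harmonic parametrix $\frac{1}{a(y)}P_{\Delta}$ and yields the same jump as in the constant-coefficient third Green identity. This gives $u+\mathcal{R}u-VT^{+}u+W\gamma^{+}u=\mathcal{P}(\mathcal{A}u)$ for smooth $u$; recasting $\mathcal{P}(\mathcal{A}u)+VT^{+}u$ as $\mathcal{P}\mathcal{\check{A}}u$ (justified in the next paragraph) cancels the single-layer term and produces the stated form. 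I would then extend to arbitrary $u\in H^{1}(\Omega)$ by density of $\mathcal{D}(\overline{\Omega})$ and the continuity of each operator: $\mathcal{R}:H^{1}(\Omega)\to H^{1}(\Omega)$ and $\mathcal{P}:\widetilde{H}^{-1}(\Omega)\to H^{1}(\Omega)$ from Theorem \ref{ch4thmUR}, $\mathcal{\check{A}}:H^{1}(\Omega)\to\widetilde{H}^{-1}(\Omega)$ from its definition, and $W\gamma^{+}:H^{1}(\Omega)\to H^{1}(\Omega)$ from Theorem \ref{thmlipcont}. Alternatively, (i) can be derived by reducing every potential to its harmonic counterpart through \eqref{ch4relP}--\eqref{ch4relDL} and invoking the known third Green identity for $\Delta$.

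For (ii), instead of feeding the singular $P(\cdot,y)$ into \eqref{eq:zc6}, I would exploit that $\tilde{f}-\mathcal{\check{A}}u$ restricts to zero on $\Omega$ and is therefore supported on $\partial\Omega$; by Definition \ref{genco} it equals the surface distribution $\gamma^{*}T^{+}(\tilde{f},u)$, where $\gamma^{*}$ is the adjoint of the trace. Applying the bounded operator $\mathcal{P}$ and using that it sends a single-layer source to the single-layer potential, $\mathcal{P}(\gamma^{*}\psi)=-V\psi$ (a consequence of \eqref{PRrel} and \eqref{ch4relSL}), gives $\mathcal{P}\tilde{f}-\mathcal{P}\mathcal{\check{A}}u=-VT^{+}(\tilde{f},u)$. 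Substituting into (i) and rearranging yields \eqref{eq:3GI}, and the same identity legitimises the replacement $\mathcal{P}\mathcal{\check{A}}u=\mathcal{P}\tilde{f}+VT^{+}(\tilde{f},u)$ used above. For (iii) I would simply take the interior trace $\gamma^{+}$ of \eqref{eq:3GI}, which is admissible since every term lies in $H^{1}(\Omega)$; using Corollary \ref{corcontDV} and the jump relations of Corollary \ref{corjump}, namely $\gamma^{+}V\psi=\mathcal{V}\psi$ and $\gamma^{+}W\varphi=\mathcal{W}\varphi-\frac{1}{2}\varphi$, the double-layer jump contributes $-\frac{1}{2}\gamma^{+}u$, which combines with $\gamma^{+}u$ to give the factor $\frac{1}{2}$ in \eqref{ch4G3co}.

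The main obstacle is the rigorous control of the parametrix singularity in (i): verifying that the spherical integral over $\partial B_{\epsilon}(y)$ converges exactly to $u(y)$ despite the variable coefficient, that the weakly singular volume remainder converges to $\mathcal{R}u$, and that the identity then propagates from smooth functions to all of $H^{1}(\Omega)$ through the mapping properties of the potentials. Once (i) is secured, parts (ii) and (iii) follow almost algebraically from the definition of the generalised co-normal derivative and from the jump relations.
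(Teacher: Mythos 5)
Your proposal is correct, but it reaches the result by a genuinely different route from the paper. The paper derives (i) and (ii) by substituting $v=P(\cdot,y)$ directly as the test function in the first Green identity \eqref{eq:zcgi} and in the second Green identity \eqref{eq:zc8}, reading off $W\gamma^{+}u$, $VT^{+}(\tilde f,u)$ and $\langle\mathcal{A}P,u\rangle_{\Omega}=u+\mathcal{R}u$ from the resulting pairings, and then obtains (iii) by taking traces exactly as you do. You instead prove (i) first for smooth $u$ by excising a ball around $y$ and passing to the limit, extend to all of $H^{1}(\Omega)$ by density and the continuity of $\mathcal{R}$, $W\gamma^{+}$ and $\mathcal{P}\check{\mathcal{A}}$, and then deduce (ii) purely algebraically from (i) via the identity $\tilde f-\check{\mathcal{A}}u=\gamma^{*}T^{+}(\tilde f,u)$ (which is the first Green identity \eqref{eq:zc6} read as an equality in $\widetilde{H}^{-1}(\Omega)$) together with $\mathcal{P}\gamma^{*}\psi=-V\psi$. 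What your route buys is rigour at precisely the point where the paper's argument is only formal: for fixed $y\in\Omega$ one has $\nabla_{x}P(x,y)=\mathcal{O}(|x-y|^{-2})$, which is not square integrable in $\mathbb{R}^{3}$, so $P(\cdot,y)\notin H^{1}(\Omega)$ and a fortiori not in $H^{1,0}(\Omega;\mathcal{A})$; the paper's direct substitution of $P$ for $v$ in \eqref{eq:zcgi} is therefore a formal device, whereas your excision-plus-density argument and your distributional derivation of the single-layer term avoid it entirely. The price is that you must supply two standard facts not proved in the paper: the density of $\mathcal{D}(\overline{\Omega})$ in $H^{1}(\Omega)$ on a Lipschitz domain, and the representation $\textbf{P}_{\Delta}\gamma^{*}\chi=-V_{\Delta}\chi$ of the harmonic single-layer potential as the Newtonian potential of a surface distribution (your citation of \eqref{PRrel} and \eqref{ch4relSL} alone does not quite yield this; it is the adjoint-of-trace identity, combined with $\gamma^{*}(\psi)/a=\gamma^{*}(\psi/a)$ for smooth $a$, that does). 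Both are available in the cited literature, so the proposal is sound.
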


\begin{proof}
Let us prove item (i). First, we consider the first Green identity \eqref{eq:zcgi} with the roles of $u$ and $v$ interchanged
\begin{equation}\label{L1}
\langle T^{+}v,\gamma^{+}u\rangle_{\partial\Omega} = \mathcal{E}(u,v)+\langle \mathcal{A}u, v\rangle_{\Omega}.
\end{equation}
In order to apply the first Green identity, we needed $u\in H^{1}(\Omega)$ and $v\in H^{1,0}(\Omega;\mathcal{A})$. Let us take $v:=P(x,y)$ as the parametrix. Let us remark that as $P$ is the parametrix, then
\begin{equation}
\langle \mathcal{A}P, u\rangle_{\Omega} = u+\mathcal{R}u.
\end{equation}
For every distribution $u\in H^{1}(\Omega)$, $\langle \mathcal{A}P, u\rangle_{\Omega}\in H^{1}(\Omega)$, and hence in $ L^{2}(\Omega)$ due to the mapping properties of the operator $\mathcal{R}$ given in Theorem \ref{ch4thmUR}. This implies that, as a distribution, $P\in H^{1,0}(\Omega;\mathcal{A})$
Then, the identity \eqref{L1} can now be reformulated in terms of the surface and volume integral operators as
\begin{equation}\label{L2}
W\gamma^{+}u= \mathcal{E}(u,P)+\mathcal{P}\mathcal{A}u.
\end{equation}
Since $\mathcal{E}(u,P)=-\langle \mathcal{A}P, u\rangle_{\Omega}$ and $\mathcal{A}P=\delta +R$ for being $P$ a parametrix, we obtain
\begin{equation}\label{L3}
W\gamma^{+}u= -u-\mathcal{R}u+\mathcal{P}\mathcal{A}u.
\end{equation}
what implies the result. \\
Let us prove now item [(ii)]. Since $r_{\Omega}\mathcal{A}u =\tilde{f}$ with $\tilde{f}\in\widetilde{H}^{-1}(\Omega)$, then, we need to use the generalised second Green identity \eqref{eq:zc8}, again swapping the roles of $u$ and $v$ 
\begin{equation}
 \langle \tilde{f}, v\rangle_{\Omega}-\langle \mathcal{\tilde{A}}v, u \rangle_{\Omega}=\langle T^+(\tilde{f},u),\gamma^+v\rangle_{\partial\Omega}-
 \langle T^+v,\gamma^+u\rangle_{\partial\Omega}.
\end{equation}
As before, make $v=P(x,y)$ to obtain a new representation formula in terms of the parametrix-based surface and volume potentials 
\begin{equation}
\mathcal{P}\tilde{f}-\langle \mathcal{\widetilde{A}}P, u \rangle_{\Omega}=-VT^+(\tilde{f},u)+ W\gamma^+u.
\end{equation}
Taking into account that $\mathcal{\widetilde{A}}P = \delta +R$ and rearranging terms, we obtain 
\begin{equation}\label{L4}
 u +\mathcal{R}u -VT^+(\tilde{f},u)+ W\gamma^+u=\mathcal{P}\tilde{f}. 
\end{equation}
Item (iii) directly follows from item (ii) by taking the trace of \eqref{L4}, keeping in mind the jump property $\gamma^{+ }W\gamma^+u = -\frac{1}{2}\gamma^+u +\mathcal{W}\gamma^+u$ given by Corollary \ref{corjump} and the mapping properties given in Corollary \ref{corcontDV}. 
\end{proof}

To derive the boundary-domain integral equation systems, we will use the integral representation formulas obtained in the previous theorem. However, we will substitute that both the trace and generalised conormal derivatives are independent from $u$. Hence, we will use the distributions $\Psi$ and $\Phi$ in their place as unknowns alongside $u$, and consider the new boundary domain integral equation
\begin{equation}\label{ch4G3ind}
u+\mathcal{R}u-V\Psi+W\Phi=\mathcal{P}\tilde{f},\hspace{0.5em}{\rm in\ }\Omega.
\end{equation}

We will show now that any triple $(u,\Psi, \Phi)$ satisfying the previous relation, solves the PDE \eqref{ch4BVP1}. 

The following two statements are a generalisation of Lemma 9 and Lemma 10 in \cite{carlosL} to the case where $\tilde{f}\in H_{-1}(\Omega)$.

\begin{lemma}\label{Lem:zc1}
Let $u\in H^{1}(\Omega), \Psi \in H^{-\frac{1}{2}}(\partial\Omega), \Phi \in H^{\frac{1}{2}}(\partial\Omega),~\mathrm{and}~\tilde{f}\in \widetilde{H}^{-1}(\Omega)$ satisfy equation \eqref{ch4G3ind}. Then
\begin{itemize}
\item [(i)] $u$ solves $Au=r_{\Omega}\tilde{f}$, in $\Omega$, 
\item [(ii)] $r_{\Omega}V(\Psi -T^{+}(\tilde{f},u))-r_{\Omega}W(\Phi -\gamma^{+}u)=0$, in $\Omega$.
\end{itemize}
\end{lemma}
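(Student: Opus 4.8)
The plan is to compare the assumed equation \eqref{ch4G3ind} against the third Green identity \eqref{eq:3GI}, which was established for the actual solution of $\mathcal{A}u=r_{\Omega}\tilde f$. The key observation is that \eqref{ch4G3ind} has exactly the same structure as \eqref{eq:3GI}, except that the genuine boundary data $T^{+}(\tilde f,u)$ and $\gamma^{+}u$ have been replaced by the free unknowns $\Psi$ and $\Phi$. So the strategy for item (i) is to apply the operator $\mathcal{A}$ (in the distributional/variational sense of Section 2) to both sides of \eqref{ch4G3ind} and show that every term except $u$ either reproduces the remainder that cancels $\mathcal{R}u$ or vanishes as a solution of the homogeneous equation in $\Omega$.

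For item (i), first I would recall the defining parametrix relation \eqref{parametrixdef}, which at the level of potentials gives $\mathcal{A}\,\mathcal{P}\tilde f = \tilde f + \mathcal{R}\tilde f$ and, crucially, $\mathcal{A}(u+\mathcal{R}u)=\mathcal{A}u + \mathcal{A}\mathcal{R}u$ where the remainder potential satisfies the analogous identity. The cleanest route is to use that the single- and double-layer parametrix-based potentials $V\Psi$ and $W\Phi$ satisfy $\mathcal{A}V\Psi = 0$ and $\mathcal{A}W\Phi = 0$ in $\Omega$ away from the boundary (they are parametrix-based surface potentials, so applying $\mathcal{A}_x$ produces only remainder contributions supported through $R$, which I must track carefully). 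Applying $r_{\Omega}\mathcal{A}$ to \eqref{ch4G3ind} and using relations \eqref{ch4relP}--\eqref{ch4relR} together with the constant-coefficient identities for $\mathbf{P}_\Delta$ should collapse everything to $\mathcal{A}u = r_{\Omega}\tilde f$ in $\Omega$, which is item (i).

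For item (ii), once item (i) is known, $u$ genuinely solves $\mathcal{A}u=r_{\Omega}\tilde f$, so the canonical and generalised co-normal derivatives and the trace $\gamma^{+}u$ are well defined, and the third Green identity \eqref{eq:3GI} applies to $u$. I would then simply subtract \eqref{eq:3GI} from the assumed identity \eqref{ch4G3ind}. The $u$, $\mathcal{R}u$, and $\mathcal{P}\tilde f$ terms cancel identically, leaving
\begin{equation*}
-V\Psi + W\Phi + V T^{+}(\tilde f,u) - W\gamma^{+}u = 0 \quad \text{in } \Omega,
\end{equation*}
which after rearranging is precisely $r_{\Omega}V(\Psi - T^{+}(\tilde f,u)) - r_{\Omega}W(\Phi - \gamma^{+}u)=0$, the claimed item (ii).

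The main obstacle I anticipate is the rigorous justification in item (i) that applying $\mathcal{A}$ to the surface potentials $V\Psi$ and $W\Phi$ produces no interior contribution. Because these are parametrix-based rather than fundamental-solution-based, $\mathcal{A}_x P(x,y)=\delta(x-y)+R(x,y)$ carries a nonzero remainder, so one must verify, using relations \eqref{ch4relSL}--\eqref{ch4relDL} and the harmonicity of the Laplace-based layer potentials in $\Omega$, that the remainder terms integrate to zero or are absorbed consistently; the low regularity $\tilde f\in\widetilde{H}^{-1}(\Omega)$ and the need to interpret all pairings variationally via the operator $\mathcal{E}$ rather than pointwise is what makes this delicate compared with the smooth $L^2$ case in \cite{carlosL}.
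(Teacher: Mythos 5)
Your plan for item (ii) coincides with the paper's: once (i) is established, subtract the third Green identity \eqref{eq:3GI} from \eqref{ch4G3ind}, cancel the $u$, $\mathcal{R}u$ and $\mathcal{P}\tilde f$ terms, and read off the claimed identity. The genuine gap is in item (i). You propose to apply $\mathcal{A}$ to \eqref{ch4G3ind} and assert that $\mathcal{A}V\Psi=0$ and $\mathcal{A}W\Phi=0$ in $\Omega$. That assertion is false: by \eqref{ch4relSL}--\eqref{ch4relDL} the parametrix-based layer potentials are \emph{harmonic} in $\Omega$, i.e.\ annihilated by $\Delta$, not by the variable-coefficient operator $\mathcal{A}$. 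Indeed
\begin{equation*}
\mathcal{A}V\Psi \;=\; a\,\Delta V_{\Delta}\!\left(\tfrac{\Psi}{a}\right)+\nabla a\cdot\nabla V_{\Delta}\!\left(\tfrac{\Psi}{a}\right)\;=\;\nabla a\cdot\nabla V_{\Delta}\!\left(\tfrac{\Psi}{a}\right)\neq 0
\end{equation*}
in general, and analogous first-order leftovers appear from $W\Phi$, $\mathcal{R}u$ and $\mathcal{P}\tilde f$. You explicitly flag this bookkeeping as your ``main obstacle'' and leave it unresolved, so as written the argument for (i) does not close.

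The paper's resolution is a one-line change of operator: first use \eqref{ch4relP}, \eqref{ch4relSL}, \eqref{ch4relDL} (and \eqref{ch4relR} if you work with \eqref{ch4G3ind} directly rather than with its difference from the Green identity, as in \eqref{ch4lema1.3}) to rewrite every potential as a Laplace-based one, and then apply $\Delta$ --- not $\mathcal{A}$ --- to both sides. Since $V_{\Delta}$ and $W_{\Delta}$ are harmonic in $\Omega$ and $\Delta\mathbf{P}_{\Delta}g=g$ holds distributionally for any $g$ (in particular for $g\in\widetilde{H}^{-1}(\Omega)$, so the low regularity of $\tilde f$ causes no extra difficulty here), everything collapses at once to $(\check{\mathcal{A}}u-\tilde f)/a=0$, i.e.\ $\mathcal{A}u=r_{\Omega}\tilde f$, with no remainder terms to track. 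Your route can in fact be repaired: writing $\mathcal{A}=a\Delta+\nabla a\cdot\nabla$ and observing that the first-order part $\nabla a\cdot\nabla$ applied to both sides of \eqref{ch4G3ind} yields a trivially true identity, applying $\mathcal{A}$ is equivalent to applying $a\Delta$; but this observation is precisely the step missing from your proposal, and the direct $\Delta$-argument is cleaner. One point in your favour: proving (i) without ever invoking $T^{+}(\tilde f,u)$ (which by Definition \ref{genco} is only defined once $\mathcal{A}u=r_{\Omega}\tilde f$ is known) and only then appealing to \eqref{eq:3GI} for (ii) is the logically correct order, and your write-up respects it more transparently than the paper's own display \eqref{ch4lema1.3}.
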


\begin{proof}
Take equation \eqref{ch4G3ind} and subtract it from the third Green identity \eqref{eq:3GI} applied to $u$, to obtain \eqref{ch4G3ind} to obtain
\begin{equation}\label{ch4lema1.3}
W(\gamma^{+}u-\Phi)-V(T^{+}(\tilde{f},u)-\Psi)=\mathcal{P}(\check{\mathcal{A}}u-\tilde{f}).
\end{equation}
Let us apply relations \eqref{ch4relP}, \eqref{ch4relSL} and \eqref{ch4relDL} to \eqref{ch4lema1.3} 
\begin{align*}
 V_{\Delta}\left(\dfrac{\tilde{f},u)-\Psi}{a}\right)-W_{\Delta}(\gamma^{+}u-\Phi)
+V_{\Delta}\left(\dfrac{\partial\ln a}{\partial n}\,(\gamma^{+}u-\Phi)\right) = \mathcal{P}_{\Delta}\left(\check{\mathcal{A}}u-\tilde{f}\right)
\end{align*}
Then, apply the Laplace operator to both sides to obtain 
\begin{equation}\label{ch4lema1.5}
\check{\mathcal{A}}u-\tilde{f}=0,
\end{equation}
what implies that $r_{\Omega}\check{\mathcal{A}}u =\mathcal{A}u=r_{\Omega}\tilde{f}$ from where it follows item (i). 
Finally, substituting \eqref{ch4lema1.5} into \eqref{ch4lema1.3}, we prove item (ii).
\end{proof}

The following Lemma is a direct consequence of the invertibility of the direct value of the single layer potential for the Laplace equation\cite[Corollary 8.13]{mclean}. A proof of the Lemma is available in \cite{carlosL}.

\begin{lemma}\label{ch4lemma2}
Let $\Psi^{*}\in H^{-\frac{1}{2}}(\partial\Omega)$. 
\begin{equation}\label{ch4lema2i}
V\Psi^{*}(y) = 0,\hspace{2em}y\in\Omega
\end{equation}
then $\Psi^{*}(y) = 0$.
\end{lemma}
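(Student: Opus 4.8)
The plan is to exploit the relation \eqref{ch4relSL}, which expresses the parametrix-based single layer potential in terms of its harmonic counterpart, namely $V\Psi^{*} = V_{\Delta}\left(\dfrac{\Psi^{*}}{a}\right)$. Since $a(x)$ is strictly positive and bounded away from zero on $\overline{\Omega}$, multiplication by $1/a$ is an isomorphism of $H^{-\frac{1}{2}}(\partial\Omega)$ onto itself; hence setting $\psi^{*} := \Psi^{*}/a \in H^{-\frac{1}{2}}(\partial\Omega)$, the hypothesis $V\Psi^{*}(y)=0$ for all $y\in\Omega$ becomes $V_{\Delta}\psi^{*}(y)=0$ for all $y\in\Omega$. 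The task thus reduces to the corresponding uniqueness statement for the harmonic single layer potential.

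First I would invoke the jump relations from Corollary \ref{corjump}: for the harmonic single layer potential one has $\gamma^{+}V_{\Delta}\psi^{*} = \gamma^{-}V_{\Delta}\psi^{*}$ and $T^{+}V_{\Delta}\psi^{*}-T^{-}V_{\Delta}\psi^{*}=\psi^{*}$. Since $V_{\Delta}\psi^{*}$ vanishes identically in $\Omega=\Omega^{+}$, its interior trace and interior co-normal derivative are both zero. The decisive point is then to show the exterior co-normal derivative $T^{-}V_{\Delta}\psi^{*}$ also vanishes, so that the jump relation forces $\psi^{*}=0$. To obtain this, I would use that $V_{\Delta}\psi^{*}$ is harmonic in the exterior domain $\Omega^{-}$, possesses the decay at infinity inherent to the single layer Newtonian kernel, and has zero exterior trace $\gamma^{-}V_{\Delta}\psi^{*}=\gamma^{+}V_{\Delta}\psi^{*}=0$ (by the continuity of the single layer across $\partial\Omega$). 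By uniqueness for the exterior Dirichlet problem for the Laplacian with the appropriate radiation/decay condition, $V_{\Delta}\psi^{*}$ must vanish identically in $\Omega^{-}$ as well, whence $T^{-}V_{\Delta}\psi^{*}=0$.

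Combining the two contributions, the jump relation \eqref{lipcvw2} gives $\psi^{*} = T^{+}V_{\Delta}\psi^{*}-T^{-}V_{\Delta}\psi^{*} = 0$, and consequently $\Psi^{*}=a\,\psi^{*}=0$. This is precisely the invertibility of the single layer potential referenced in \cite[Corollary 8.13]{mclean}, adapted here through the scaling by $1/a$.

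The main obstacle will be the exterior step: establishing that the vanishing of $V_{\Delta}\psi^{*}$ in $\Omega^{+}$, together with the transmission (continuity) of the trace across $\partial\Omega$, propagates to vanishing in $\Omega^{-}$. This requires the correct decay behaviour of the harmonic single layer potential at infinity and a uniqueness theorem for the exterior Dirichlet problem on a Lipschitz domain; in three dimensions the Newtonian single layer decays like $|y|^{-1}$, which suffices. A subtle point worth flagging is that uniqueness for the exterior problem in $\mathbb{R}^{3}$ is guaranteed only under this decay, and one must verify the potential genuinely exhibits it rather than merely being bounded; this is where the argument could require care if $\partial\Omega$ is merely Lipschitz rather than smooth. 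Since the excerpt already cites \cite[Corollary 8.13]{mclean} and notes that a proof appears in \cite{carlosL}, I would ultimately defer the technical details of the exterior uniqueness to those references while presenting the reduction via \eqref{ch4relSL} as the essential new ingredient for the variable-coefficient setting.
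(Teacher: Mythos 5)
Your argument is correct, but it takes a genuinely different route from the one the paper relies on. The paper treats the lemma as a direct consequence of the invertibility of the \emph{direct value} of the harmonic single layer potential, $\mathcal{V}_{\Delta}:H^{-\frac{1}{2}}(\partial\Omega)\to H^{\frac{1}{2}}(\partial\Omega)$ \cite[Corollary 8.13]{mclean}: one simply takes the interior trace of $V\Psi^{*}=0$, uses \eqref{ch4relDVSL} to get $\mathcal{V}_{\Delta}(\Psi^{*}/a)=0$ on $\partial\Omega$, and concludes $\Psi^{*}/a=0$, hence $\Psi^{*}=0$, from the injectivity of $\mathcal{V}_{\Delta}$ (which holds on any Lipschitz boundary in $\mathbb{R}^{3}$). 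You share the same first reduction --- and your observation that multiplication by $1/a$ is an isomorphism of $H^{-\frac{1}{2}}(\partial\Omega)$ is correct since $a\in C^{2}(\overline{\Omega})$ is bounded below --- but instead of invoking that invertibility you re-prove the injectivity of the harmonic single layer potential from first principles: continuity of the trace across $\partial\Omega$, uniqueness of the exterior Dirichlet problem under the $O(|y|^{-1})$ decay of the Newtonian single layer in $\mathbb{R}^{3}$, and the conormal-derivative jump \eqref{lipcvw2}. This buys a self-contained, potential-theoretic proof that makes transparent \emph{why} the density must vanish; the cost is that you must separately justify exterior uniqueness on a merely Lipschitz boundary and the validity of the jump relations for $H^{-\frac{1}{2}}$ densities (both true, but precisely the content packaged into the cited corollary). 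You correctly flag these as the delicate points and note that your argument is in substance a proof of the cited invertibility, so there is no gap --- only a longer path to the same conclusion.
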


\section{BDIE system for the Dirichlet problem}
We aim to obtain a segregated boundary-domain integral equation system for Dirichlet BVP \eqref{ch4BVP}.  Corresponding formulations for the mixed problem for $u\in H^{1,0}(\Omega;\Delta)$ with $f\in L_{2}(\Omega)$ were introduced and analysed in \cite{carlos1,carlos2,mikhailov1}. 
Let $\tilde{f}\in\widetilde{H}^{-1}(\Omega)$ be an extension of $f\in H^{-1}(\Omega)$, (i.e., $f=r_{\Omega}\tilde{f}$), which always exists, see \cite[Lemma 2.15 and Theorem 2.16]{traces}. Let us represent the generalized co-normal derivative and the trace of the function $u$ as $T^{+}(\tilde{f},u)=\psi,\;\;\gamma^{+}u=\varphi_{0},$ and we will regard the new unknown function $\psi\in H^{-\frac{1}{2}}(\partial\Omega)$ as formally segregated of $u.$ Thus we will look for the couple $(u,\psi)\in H^{1}(\Omega)\times H^{-\frac{1}{2}}(\partial\Omega).$

To obtain one of the possible boundary-domain integral equation systems we will use equation \eqref{ch4G3ind} in the domain $\Omega$, and equation \eqref{ch4G3co} on $\partial\Omega.$ Then we obtain the following system (A1) of two equations for two unknown functions,
\begin{subequations}
\begin{align}
u+\mathcal{R}u-V\psi&=F_{0}\hspace{2em}in\hspace{0.5em}\Omega,\label{ch4SM12v}\\
\gamma^{+}\mathcal{R}u-\mathcal{V}\psi&=\gamma^{+}F_{0}-\varphi_{0}\label{ch4SM12g}\hspace{2em}on\hspace{0.5em}\partial\Omega,
\end{align}
\end{subequations}
where
\begin{equation}\label{ch4F0term}
F_{0}=\mathcal{P}\tilde{f}-W\varphi_{0}.
\end{equation}

Note that for $\varphi_{0}\in H^{\frac{1}{2}}(\partial\Omega),$ we have the inclusion $F_{0}\in H^{1}(\Omega)$ if $\tilde{f}\in\widetilde{H}^{-1}(\Omega)$ due to the mapping properties of the surface and volume potentials given in Theorem \ref{ch4thmUR}, Theorem \ref{thmlipcont} and Corollary \ref{corcontDV}. 

The system (A1), given by \eqref{ch4SM12v}-\eqref{ch4SM12g} can be written in matrix notation as 
\begin{equation*}
\mathcal{A}^{1}\mathcal{U}=\mathcal{F}^{1},
\end{equation*}
where $\mathcal{U}$ represents the vector containing the unknowns of the system,
\begin{equation*}
\mathcal{U}=(u,\psi)^{\top}\in H^{1}(\Omega)\times H^{-\frac{1}{2}}(\partial\Omega),
\end{equation*}
the right hand side vector is \[\mathcal{F}^{1}:= [ F_{0}, \gamma^{+}F_{0} - \varphi_{0} ]^{\top}\in H^{1}(\Omega)\times H^{\frac{1}{2}}(\partial\Omega),\]
and the matrix operator $\mathcal{A}^{1}$ is defined by:
\begin{equation*}
   \mathcal{A}^{1}=
  \left[ {\begin{array}{ccc}
   I+\mathcal{R} & -V  \\
   \gamma^{+}\mathcal{R} & -\mathcal{V} 
  \end{array} } \right].
\end{equation*}

We note that the mapping properties of the operators involved in the matrix imply the continuity of the operator
\begin{equation*}
    \mathcal{A}^{1}: H^{1}(\Omega)\times H^{-\frac{1}{2}}(\partial\Omega)\longrightarrow H^{1}(\Omega)\times H^{\frac{1}{2}}(\partial\Omega).
\end{equation*}
Let us prove that the Dirichlet boundary value problem \eqref{ch4BVP} in $\Omega$ is equivalent to the system of the Boundary Domain Integral Equations \eqref{ch4SM12v}-\eqref{ch4SM12g}.
\begin{theorem}\label{ch4EqTh}
Let $\varphi_{0}\in H^{\frac{1}{2}}(\partial\Omega)$, $f\in H^{-1}(\Omega)$ and $\tilde{f}\in\widetilde{H}^{-1}(\Omega)$ is such that $r_{\Omega}\tilde{f}=f$. 
\begin{enumerate}
\item[i)] If a function $u\in H^{1}(\Omega)$ solves the Dirichlet BVP \eqref{ch4BVP}, then the couple $(u, \psi)^{\top}\in H^{1}(\Omega)\times H^{-\frac{1}{2}}(\partial\Omega)$ where
\begin{equation}\label{ch4eqcond}
\psi=T^{+}(\tilde{f},u),\hspace{2em}on\hspace{0.5em}\partial\Omega,
\end{equation}
solves the BDIE system (A1). 

\item[ii)] If a couple $(u, \psi)^{\top}\in H^{1}(\Omega)\times H^{-\frac{1}{2}}(\partial\Omega)$ solves the BDIE system (A1) then $u$ solves the BVP and the functions $\psi$ satisfy \eqref{ch4eqcond}.

\item[iii)] The system (A1) is uniquely solvable. 
\end{enumerate}
\end{theorem}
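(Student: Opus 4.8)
The plan is to establish implications (i) and (ii) as the two directions of the equivalence between the BVP and the system (A1), and then to deduce (iii) from (ii) by combining Fredholm theory with the stated unique solvability of the Dirichlet BVP. I expect (i) and (ii) to be bookkeeping once the integral representation formulas of Theorem 6.1 are in hand, with the genuine content sitting in the Fredholm step of (iii).

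For (i), starting from a solution $u$ of \eqref{ch4BVP} I would set $\psi:=T^{+}(\tilde{f},u)$ and read the two equations off the third Green identities. The formula \eqref{eq:3GI} gives $u+\mathcal{R}u-V\psi+W\varphi_{0}=\mathcal{P}\tilde{f}$, which is \eqref{ch4SM12v} after using $F_{0}=\mathcal{P}\tilde{f}-W\varphi_{0}$; and \eqref{ch4G3co} gives \eqref{ch4SM12g} once $\gamma^{+}F_{0}$ is rewritten via $\gamma^{+}W\varphi_{0}=\mathcal{W}\varphi_{0}-\tfrac{1}{2}\varphi_{0}$, which follows from the jump relation in Corollary \ref{corjump} together with the definition of $\mathcal{W}$.

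For (ii), the key observation is that \eqref{ch4SM12v} together with \eqref{ch4F0term} is precisely \eqref{ch4G3ind} with $\Phi=\varphi_{0}$ and $\Psi=\psi$, so Lemma \ref{Lem:zc1} applies and yields at once that $u$ solves $\mathcal{A}u=f$ in $\Omega$ and that $r_{\Omega}V(\psi-T^{+}(\tilde{f},u))=r_{\Omega}W(\varphi_{0}-\gamma^{+}u)$. To recover the boundary condition I would take the trace of \eqref{ch4SM12v}, replace $\gamma^{+}V$ by $\mathcal{V}$ using Corollary \ref{corcontDV}, and subtract \eqref{ch4SM12g}; the $\gamma^{+}\mathcal{R}u$ and $\mathcal{V}\psi$ terms cancel and leave $\gamma^{+}u=\varphi_{0}$. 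Substituting this back annihilates the right-hand side of the identity from Lemma \ref{Lem:zc1}, giving $r_{\Omega}V(\psi-T^{+}(\tilde{f},u))=0$ in $\Omega$, and Lemma \ref{ch4lemma2} then forces $\psi=T^{+}(\tilde{f},u)$, which is \eqref{ch4eqcond}.

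For (iii), I would show that $\mathcal{A}^{1}$ is an isomorphism. First I would split $\mathcal{A}^{1}=\mathcal{A}^{1}_{0}+\mathcal{K}$ with principal part $\mathcal{A}^{1}_{0}=\bigl[\begin{smallmatrix} I & -V \\ 0 & -\mathcal{V}\end{smallmatrix}\bigr]$ and $\mathcal{K}=\bigl[\begin{smallmatrix}\mathcal{R} & 0 \\ \gamma^{+}\mathcal{R} & 0\end{smallmatrix}\bigr]$. By Corollary \ref{ch4corcompact} (at $s=1$) the entries of $\mathcal{K}$ are compact, so $\mathcal{K}$ is compact; and $\mathcal{A}^{1}_{0}$ is invertible because it is upper triangular with invertible diagonal blocks, namely the identity on $H^{1}(\Omega)$ and the operator $\mathcal{V}:H^{-\frac{1}{2}}(\partial\Omega)\to H^{\frac{1}{2}}(\partial\Omega)$, which is invertible in three dimensions through the relation \eqref{ch4relDVSL} and the invertibility of the harmonic single layer operator \cite[Corollary 8.13]{mclean}. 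Hence $\mathcal{A}^{1}$ is Fredholm of index zero. For injectivity I would repeat the argument of (ii) on the homogeneous system ($\tilde{f}=0$, $\varphi_{0}=0$): it produces a $u$ solving the homogeneous Dirichlet problem, which is trivial by the stated unique solvability of \eqref{ch4BVP}, so $u=0$, $\gamma^{+}u=0$, and Lemma \ref{ch4lemma2} gives $\psi=0$. A Fredholm operator of index zero with trivial kernel is invertible, which proves unique solvability. The hard part is therefore the invertibility of the diagonal block $\mathcal{V}$: this is exactly where the three-dimensional geometry is essential (in two dimensions the harmonic single layer may fail to be injective) and where the external result is invoked; every other step uses only the compactness of Corollary \ref{ch4corcompact}, the mapping and jump properties of Corollaries \ref{corjump}--\ref{corcontDV}, and Lemmas \ref{Lem:zc1}--\ref{ch4lemma2}.
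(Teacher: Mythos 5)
Your proposal is correct, and for items (i) and (ii) it follows the paper's own argument essentially verbatim: (i) is read off the third Green identities \eqref{eq:3GI} and \eqref{ch4G3co} after setting $\psi=T^{+}(\tilde{f},u)$, and (ii) recovers $\gamma^{+}u=\varphi_{0}$ by subtracting \eqref{ch4SM12g} from the trace of \eqref{ch4SM12v} and then invokes Lemma \ref{Lem:zc1} and Lemma \ref{ch4lemma2} exactly as the paper does. Where you genuinely diverge is item (iii). The paper disposes of it in one line: unique solvability of (A1) is an immediate corollary of the equivalence just established together with the known unique solvability of the Dirichlet BVP \cite[Theorem 4.3]{steinbach} --- existence comes from applying (i) to the unique BVP solution, uniqueness from applying (ii) to any solution of (A1). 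You instead prove the stronger statement that $\mathcal{A}^{1}$ is invertible, via the compact perturbation $\mathcal{A}^{1}=\mathcal{A}^{1}_{0}+\mathcal{K}$, the invertibility of the diagonal block $\mathcal{V}$ through \eqref{ch4relDVSL} and \cite[Corollary 8.13]{mclean}, and injectivity from the homogeneous case of (ii). This is precisely the content and method of the paper's \emph{next} theorem (the invertibility of $\mathcal{A}^{1}$), so your proof effectively merges the two results. What your route buys is the stronger conclusion (invertibility, hence continuous dependence on arbitrary right-hand sides in $H^{1}(\Omega)\times H^{\frac{1}{2}}(\partial\Omega)$, not just those of the form $\mathcal{F}^{1}$) and a cleaner logical ordering, since you obtain injectivity directly from the homogeneous system rather than citing item (iii) as the paper's second theorem does; the cost is that you must invoke the invertibility of the harmonic single layer operator, which the paper's one-line argument for (iii) does not need. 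Both arguments are valid.
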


\begin{proof}
$i)$. Let $u\in H^{1}(\Omega)$ be a solution of the boundary value problem \eqref{ch4BVP}. Then, from Definition \ref{genco}, the generalised conormal derivative is well defined for the pair $(\tilde{f},u)$. Hence, let $\psi:= T^{+}(\tilde{f},u)\in H^{-\frac{1}{2}}(\partial\Omega)$. Replacing the pair $(u,\psi)$ in \eqref{ch4SM12v}-\eqref{ch4SM12g}, we arrive to the third Green identities for $u$ and $\gamma^{+}u$ given in the Lemma \ref{Lem:zc1}. Therefore, the pair $(u,\psi)$ solves the BDIEs \eqref{ch4SM12v}-\eqref{ch4SM12g}.

$ii)$. Let now the couple $(u, \psi)^{\top}\in H^{1}(\Omega)\times H^{-\frac{1}{2}}(\partial\Omega)$ solve the BDIE system. Taking the trace of the equation \eqref{ch4SM12v} and substract it from the equation \eqref{ch4SM12g}, we obtain
\begin{equation}\label{ch4M12a1}
\gamma^{+}u=\varphi_{0}, \hspace{1em} \text{on}\hspace{0.5em}\partial\Omega.
\end{equation}
i.e, $u$ satisfies the Dirichlet condition \eqref{ch4BVP2}. 
Equation \eqref{ch4SM12g} and Lemma \ref{Lem:zc1} with $\Psi=\psi,\Phi=\varphi_{0}$ imply that $u$ is a solution of PDE \eqref{ch4BVP1} and 
\begin{equation}\label{ch4M12a2}
r_{\Omega}V(\psi - T^{+}(\tilde{f},u)) - r_{\Omega}W(\varphi_{0}-\gamma^{+}u)=0.
\end{equation}
Since $\gamma^{+}u=\varphi_{0}$, then $\gamma^{+}u-\varphi_{0}=0$. Hence, equation \ref{ch4M12a2} becomes
\begin{equation}\label{ch4M12a22}
V(\psi - T^{+}(\tilde{f},u))=0 \text{ in } \Omega.
\end{equation}
Applying now Lemma \ref{ch4lemma2} with $\Psi^{*} =\psi - T^{+}(\tilde{f},u)$ then implies $\Psi^{*}=0$. This implies that $\psi = T^{+}(\tilde{f},u)$. Thus, $u$ obtained from the solution of BDIE system (A1) solves the Dirichlet problem.

Item $iii)$ immediately follows from the equivalence between the BDIE system and the BVP. Since the Dirichlet boundary value problem \eqref{ch4BVP1}-\eqref{ch4BVP2} is uniquely solvable\cite[Theorem 4.3]{steinbach}, so it is the BDIE system \eqref{ch4SM12v}-\eqref{ch4SM12g}.\end{proof}

Let us now prove the invertibility of the operator $\mathcal{A}^{1}$
\begin{theorem}
The operator \[\mathcal{A}^{1}:H^{1}(\Omega)\times H^{-\frac{1}{2}}(\partial\Omega)\longrightarrow H^{1}(\Omega)\times H^{\frac{1}{2}}(\partial\Omega),\] is invertible.
\end{theorem}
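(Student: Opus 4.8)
The plan is to prove invertibility by combining the Fredholm theory with the injectivity that follows from the equivalence theorem just established. First I would decompose the operator $\mathcal{A}^{1}$ as $\mathcal{A}^{1}=\mathcal{A}^{1}_{0}+\mathcal{K}$, where $\mathcal{A}^{1}_{0}$ collects the principal (non-compact) part and $\mathcal{K}$ the compact remainder. The natural candidate for the principal part is
\begin{equation*}
\mathcal{A}^{1}_{0}=
\left[ \begin{array}{cc}
I & -V \\
0 & -\mathcal{V}
\end{array}\right],
\end{equation*}
since the operators $\mathcal{R}:H^{1}(\Omega)\to H^{1}(\Omega)$ and $\gamma^{+}\mathcal{R}:H^{1}(\Omega)\to H^{1/2}(\partial\Omega)$ are compact by Corollary \ref{ch4corcompact}. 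Thus $\mathcal{K}$, built from $\mathcal{R}$ and $\gamma^{+}\mathcal{R}$, is compact and $\mathcal{A}^{1}-\mathcal{A}^{1}_{0}$ is a compact perturbation.

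Next I would show that $\mathcal{A}^{1}_{0}$ is invertible. Because it is upper-triangular, this reduces to checking that the diagonal entries are invertible in the appropriate spaces: the identity $I:H^{1}(\Omega)\to H^{1}(\Omega)$ trivially, and the direct value of the single-layer operator $\mathcal{V}:H^{-1/2}(\partial\Omega)\to H^{1/2}(\partial\Omega)$, whose invertibility follows from relation \eqref{ch4relDVSL}, $\mathcal{V}\rho=\mathcal{V}_{\Delta}(\rho/a)$, together with the classical invertibility of the harmonic single-layer operator $\mathcal{V}_{\Delta}$ on Lipschitz boundaries (see \cite[Corollary 8.13]{mclean}) and the fact that multiplication by $1/a$ is an isomorphism since $0<a_{\min}\le a\le a_{\max}$. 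Given invertibility of the diagonal blocks, the inverse of the triangular operator is constructed explicitly by back-substitution, confirming that $\mathcal{A}^{1}_{0}$ is an isomorphism. Consequently $\mathcal{A}^{1}$ is a Fredholm operator of index zero, being a compact perturbation of an invertible operator.

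Finally, for a Fredholm operator of index zero, invertibility is equivalent to injectivity, so it remains to show that $\mathcal{A}^{1}$ has trivial kernel. Suppose $\mathcal{A}^{1}(u,\psi)^{\top}=0$; this means $(u,\psi)$ solves the homogeneous BDIE system, i.e. system (A1) with $\varphi_{0}=0$ and $\tilde{f}=0$. By the equivalence result of Theorem \ref{ch4EqTh}(ii), $u$ must then solve the homogeneous Dirichlet BVP \eqref{ch4BVP} and $\psi=T^{+}(0,u)$. Since the homogeneous Dirichlet problem has only the trivial solution \cite[Theorem 4.3]{steinbach}, we get $u=0$, and then $\psi=T^{+}(0,0)=0$. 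Hence the kernel is trivial, and an injective Fredholm operator of index zero is invertible.

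I expect the main obstacle to be the rigorous justification that the chosen splitting $\mathcal{A}^{1}=\mathcal{A}^{1}_{0}+\mathcal{K}$ genuinely realises $\mathcal{A}^{1}_{0}$ as a bounded invertible operator between the \emph{same} pair of spaces $H^{1}(\Omega)\times H^{-1/2}(\partial\Omega)\to H^{1}(\Omega)\times H^{1/2}(\partial\Omega)$ on which $\mathcal{A}^{1}$ acts, and in particular that the single-layer block $\mathcal{V}$ is invertible in exactly these trace spaces on a Lipschitz boundary rather than merely Fredholm. The appeal to \eqref{ch4relDVSL} and the classical theory should resolve this, but care is needed because on Lipschitz domains the invertibility of $\mathcal{V}_{\Delta}$ can fail for the exceptional case where the logarithmic capacity of the boundary equals one in two dimensions; in three dimensions this obstruction is absent, so the argument goes through cleanly for the present setting $\Omega\subset\mathbb{R}^{3}$.
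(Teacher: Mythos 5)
Your proposal is correct and follows essentially the same route as the paper: the same upper-triangular principal part $\mathcal{A}^{1}_{0}$, compactness of $\mathcal{R}$ and $\gamma^{+}\mathcal{R}$ from Corollary \ref{ch4corcompact}, invertibility of the diagonal blocks (the paper cites \cite{carlosL} for $\mathcal{V}$, while you derive it from \eqref{ch4relDVSL} and \cite[Corollary 8.13]{mclean}, which is the same underlying fact), and then index-zero Fredholmness plus the injectivity supplied by Theorem \ref{ch4EqTh}. Your extra remarks on the isomorphism property of multiplication by $1/a$ and the absence of the two-dimensional capacity obstruction are accurate but not needed in the three-dimensional setting of the paper.
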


\begin{proof}
To prove the invertibility, let $\mathcal{A}_{0}^{1}$ be the matrix operator defined by
\begin{equation*}\label{ch4M012}
  \mathcal{A}_{0}^{1}:=
  \left[ {\begin{array}{ccc}
   I & -V  \\
   0 & -\mathcal{V} \\
  \end{array} } \right].
\end{equation*}
As a result of compactness properties of the operators $\mathcal{R}$ and $\gamma^{+}\mathcal{R}$ (cf. Corollary \ref{ch4corcompact}), the operator $\mathcal{A}_{0}^{1}$ is a compact perturbation of operator $\mathcal{A}^{1}$. The operator  $\mathcal{A}_{0}^{1}$ is an upper triangular matrix operator and invertibility of its diagonal operators $I:H^{1}(\Omega)\longrightarrow H^{1}(\Omega)$ and $\mathcal{V}:H^{-\frac{1}{2}}(\partial\Omega)\longrightarrow H^{\frac{1}{2}}(\partial\Omega)$ (cf. Theorem \cite[Section 4]{carlosL}). This implies that 
\begin{center}
$\mathcal{A}_{0}^{1}: H^{1}(\Omega)\times H^{-\frac{1}{2}}(\partial\Omega)\longrightarrow H^{1}(\Omega)\times H^{\frac{1}{2}}(\partial\Omega)$
\end{center}
is an invertible operator. Thus $\mathcal{A}^{1}$ is a Fredholm operator with zero index. Hence the Fredholm property and the injectivity of the operator $\mathcal{A}^{1}$, provided by item $iii)$ of Theorem \ref{ch4EqTh}, imply the invertibility of operator $\mathcal{A}^{1}$.
\end{proof}

\section{Conclusions}
A new parametrix for the diffusion equation in non homogeneous media with Lipschitz domain and source term in $H^{-1}(\Omega),$ allows us to obtain an equivalent and uniquely solvable system of Boundary-Domain Integral Equations.  

Hence, further investigation about the numerical advantages of using one family of parametrices over another will follow. Now, numerical methods can be applied when the source term belongs to $H^{-1}(\Omega)$\cite{trujillo,ponce,beyer}. 

Further work will consist of extending the results presented in this paper to unbounded domains, non-smooth coefficients or other BVP problems with different boundary conditions as well as providing a localised version of the BDIEs (A1) presented in this paper, inspired by the works, \cite{mikhailov1,localised}.  

We highlight again that analysing BDIEs for different parametrices, i.e. depending on the variable coefficient $a(x)$ or $a(y),$ is crucial to understand the analysis of BDIEs derived with parametrices that depend on the variable coefficient $a(x)$ and $a(y)$ at the same time, as it is the case for the Stokes system \cite{carlos2,carlos1}.


\begin{thebibliography}{99}
\bibitem{carlos2} Portillo CF. Boundary-Domain Integral Equations for the diffusion equation in inhomogeneous media based on a new family of parametrices. \textit{Journal of Complex Variables and Elliptic Equations}. 2019:1-15.

\bibitem{mikhailov1}
Chkadua O, Mikhailov SE, Natroshvili D.
Analysis of direct boundary-domain integral equations for a mixed BVP with variable coefficient, I: Equivalence and invertibility. \textit{J. Integral Equations and Appl}. 2009; 21(4): 499-543.

\bibitem{hsiao}
Hsiao GC, Wendland WL. \textit{Boundary Integral Equations.}
Springer, Berlin; 2008.

\bibitem{mclean}
McLean W. \textit{Strongly Elliptic Systems and Boundary Integral Equations.} Cambridge University Press; 2000.

\bibitem{steinbach} Steinbach O. \textit{Numerical Approximation Methods for Elliptic Boundary Value Problems}. Springer; 2007.

\bibitem{pomp} Pomp A. The Boundary-Domain Integral Method for Elliptic Systems: With Application to Shells. Springer Science \& Business Media; 1998.

\bibitem{Ravnik1} Ravnik J. \u{S}kerget L. A gradient free integral equation for diffusion-convection equation with variable coefficient and velocity. \textit{Engineering Analysis with Boundary Elements.}2013; 37(4):683-690. 

\bibitem{ravnik} Ravnik J. Tibaut J. Fast boundary-domain integral method for heat transfer simulations. \textit{Engineering Analysis with Boundary Elements}.2019;99:222-232. 

\bibitem{Ravnik2} Ravnik, J. \u{S}kerget L. Integral equation formulation of an unsteady diffusion-convection  equation with variable coefficient and velocity.\textit{Computers and Mathematics with Applications.}2014;66(12):2477-2488. 

\bibitem{numerics}  Grzhibovskis R. Mikhailov SE. Rjasanow S. Numerics of boundary-domain integral and integro-differential equations for BVP with variable coefficient in 3D. \textit{ Computational Mechanics}.2013;51(4):495-503.

\bibitem{localised} Mikhailov SE. Localized boundary-domain integral formulations for problems with variable coefficients, \textit{Engineering Analysis with Boundary
Elements}. 2002;26(8):681-690.

\bibitem{localised2} Mikhailov SE. Nakhova IS.  Mesh-based numerical implementation
of the localized boundary-domain integral-equation
method to a variable-coefficient Neumann problem. \textit{Journal of Engineering Mathematics}. 2005;51(3):251–259.

\bibitem{2dnumerics} Beshley A. Chapko R. Johansson BT. An integral equation method for the numerical solution of a Dirichlet problem for second-order elliptic equations with variable coefficients. {\it J Eng Math}. 2018;112(1): 63-73.

\bibitem{rim2} Xiao-Wei G. The radial integration method for evaluation of domain integrals with boundary-only discretization. \textit{Engineering Analysis with Boundary Elements}. 2002;26(10):905-916.

\bibitem{RIM} Al-Jawary MA. Ravnik J. Wrobel LC. \u{S}kerget L. Boundary element formulations for the numerical solution of two-dimensional diffusion problems with variable coefficients. \textit{Computers and Mathematics with Applications.}2012;64(8):2695-2711.

\bibitem{Mikhailov15} Mikhailov SE. Analysis of segregated Boundary-Domain Integral Equations for Variable-Coefficient Drichlet and Neumann Problems with General Data. \textit{ArXiv}. 2015; 1509.03501:1-32.

\bibitem{chapko} Chapko R. Johansson BT. A boundary integral equation method for numerical solution of parabolic and hyperbolic Cauchy problems.{\it Appl. Numer. Math}. 2018; 129: 104-119. 

\bibitem{exterior} Chkadua O, Mikhailov SE, Natroshvili D. Analysis of direct segregated boundary-domain integral equations for variable-coefficient mixed BVPs in exterior domains.\textit{Analysis and Applications}. 2013;11(4):p.1350006.

\bibitem{exterior2} Fresneda-Portillo C. Boundary-Domain Integral Equations for the Mixed Problem for the Diffusion Equation in Inhomogeneous Media based on a New Family of Parametrices on Unbounded Domains, arXiv 3050606.

\bibitem{mikhailov18} Mikhailov SE. Analysis of segregated boundary-domain integral equations for BVPs with non-smooth coefficients on Lipschitz domains. \textit{Boundary Value Problems}.2018;1: 1--52.

\bibitem{carlosL} Mikhailov SE. Portillo CF.  Analysis of boundary-domain integral equations based on a new parametrix for the mixed diffusion BVP with variable coefficient in an interior Lipschitz domain. \textit{J. Integral Equations and Applications}.2020:1-22.

\bibitem{carlos3} Mikhailov SE. Portillo CF. A new Family of Boundary-Domain Integral Equations for the Mixed Exterior Stationary Heat Transfer Problem with Variable Coefficient in \textit{Integral Methods in Science and Engineering, Volume 1}. Birkh\"{a}user. 2017:215-226.

\bibitem{ponce} Ponce AC. Selected problems on elliptic equations involving measures. arXiv preprint arXiv:1204.0668. 2012 Apr 3.

\bibitem{beyer} Beyer RP, LeVeque RJ. Analysis of a one-dimensional model for the immersed boundary method. \textit{SIAM Journal on Numerical Analysis}. 1992; 29(2): 332-64.

\bibitem{trujillo} Rivera MJ, Trujillo M, Romero-García V, Molina JL, Berjano E. Numerical resolution of the hyperbolic heat equation using smoothed mathematical functions instead of Heaviside and Dirac delta distributions. \textit{International Communications in Heat and Mass Transfer}. 2013; 46: 7-12.

\bibitem{dufera}
Dufera TT.  Mikhailov SE. Analysis of Boundary-Domain Integral Equations for Variable-Coefficient Dirichlet BVP in 2D in \textit{Integral Methods in Science and Engineering: Theoretical and Computational Advances.} 2015; Springer (Birkhäuser), Boston; 163-175.

\bibitem{carlos1} Mikhailov SE. Portillo CF. Analysis of Boundary-Domain Integral Equations to the Mixed BVP for a compressible stokes system with variable viscosity. \textit{ Communications on Pure and Applied Analysis}. 2019; 18(6): 3059-3088.

\bibitem{traces}
Mikhailov SE. Traces, extensions and co-normal derivatives for elliptic systems on Lipschitz domains. {\it J. Math. Anal. and Appl.}. 2011; 378(1): 324-342. 

\bibitem{costabel} Costabel M. Boundary integral operators on Lipschitz domains: Elementary results.
{\it SIAM J. Math. Anal.} 1988; 19(3): 613-626.


\bibitem{Mikhailov13} Mikhailov SE. Solution regularity and co-normal derivatives for elliptic systems with non-smooth coefficients on Lipschitz domains. \textit{J. Math. Anal. Appl.} 2013;400(1): 48–-67.




%

%
%
%




%
%
%
%
%






%
%
%
%
%
%
%

%
%


%
%
%
%
%
%
%
%

%




%






  

  



  


















%
%
%
%
%
%
%
%
%
%

\end{thebibliography}




\end{document}